\def\1{1\hskip-2.6pt{\rm l}}
\def\R{{\mathbb{R}}}
\def\E{{\mathbb{E}}}
\def\P{{\mathbb{P}}}
\def\A{{\mathcal A}}
\def\M{\mathcal M}
\def\NN{{{\mathcal N}}}
\def\X{{\mathcal{X}}}
\def\S{{\mathcal S}_{n}}
\def\T{{\mathcal T}}
\def\eps{{\xi}}
\newcommand{\pen}{\mathop{\rm pen}\nolimits}
\newcommand{\eref}[1]{(\ref{#1})}
\newcommand{\pa}[1]{\left({#1}\right)}
\newcommand{\cro}[1]{\left[{#1}\right]}
\newcommand{\ab}[1]{\left|{#1}\right|}
\newcommand{\ac}[1]{\left\{{#1}\right\}}
\newtheorem{thm}{Theorem}
\newtheorem{lem}{Lemma}
\newtheorem{prop}{Proposition}
\newtheorem{cor}{Corollary}
\newtheorem{Ass}{Assumption}
\def\telque{\big |}
\def\<{{\langle}}
\def\>{{\rangle}}
\begin{document}
\title[Bernstein-type inequality]{A Bernstein-type inequality for suprema of random processes with an application to statistics}
\date{November, 2008} 
\author{Yannick Baraud} 
\address{Universit\'e de Nice Sophia-Antipolis, Laboratoire J-A Dieudonn\'e,
  Parc Valrose, 06108 Nice cedex 02} 
\email{baraud@math.unice.fr}
\keywords{Suprema of Random Processes, Model Selection, Regression, Berstein's Inequality} 
\subjclass[2000]{60G70, 62G08}
\begin{abstract}
We use the generic chaining device proposed by Talagrand to establish exponential bounds on the deviation probability of some suprema of random processes. Then, given a random vector $\xi$ in $\R^{n}$ the components of which are independent and admit a suitable exponential moment, we deduce a deviation inequality for the  squared Euclidean norm of the projection of $\xi$ onto a linear subspace of $\R^{n}$. Finally, we provide an application of such an inequality to statistics, performing model selection in the regression setting when the errors are possibly non-Gaussian and the collection of models possibly large.  
\end{abstract}
\maketitle
%

\section{introduction}

\subsection{Controlling suprema of random processes}
Let $\pa{X_{t}}_{t\in T}$ be real-valued and centered random variables indexed by a countable and nonempty set $T$ and 
\[
Z=\sup_{t\in T}X_{t}.
\]   
A central problem in Probability and Statistics is to provide a suitable control of the probability of deviation of $Z$. When $T$ is a (countable) bounded subset of a metric space $(\X,d)$, a common technique is to use a {\it chaining device}. The basic idea is to decompose $X_{t}$ into series of the form 
\[
X_{t}=\sum_{k\ge 0} X_{t_{k+1}}-X_{t_{k}}
\]
where $X_{t_{0}}=0$  a.s. and the $(t_{k})_{k\ge 1}$ is sequence of elements of $T$ converging towards $t$ and such that for each $k$, $t_{k}$ belongs to a suitable finite subset $T_{k}$ of $T$. Then, the control of $\sup_{t\in T} X_{t}$ amounts to those of the increments $X_{t_{k+1}}-X_{t_{k}}$ simultaneously for all $k$ and all pairs of elements $(t_{k},t_{k+1})\in T_{k}\times T_{k+1}$ which are close. This approach seems to go back to Kolmogorov and was very popular in Statistics in the 90s to control suprema of empirical processes with regard to the entropy of $T$, see van de Geer~\citeyearpar{Geer90} and Barron {\it et al}~\citeyearpar{MR1679028} for example. However, this approach suffers from the drawback that it leads to pessimistic numerical constants that are in general too large to be used in statistical procedures. An alternative to chaining is the use of the concentration phenomenon of some probability measures such as the Gaussian distribution for instance. Indeed, when the $X_{t}$ are Gaussian, for all $u\ge 0$ we have 
\begin{equation}\label{gaussien}
\P\pa{Z\ge \E\pa{Z}+\sqrt{2v u}}\le e^{-u}\ \ \ {\rm where}\ \ \ v=\sup_{t\in T}{\rm var}(X_{t}).
\end{equation}
This inequality is due to Sudakov \& Cirel'son~\citeyearpar{MR0365680}. A nice features of~\eref{gaussien} lies in the fact that it allows to recover the usual deviation bound for Gaussian random variables when $T$ reduces to a single element. Compared to chaining, Inequality~\eref{gaussien} provides a powerful tool for controlling suprema of Gaussian processes as soon as one is able to  evaluate $\E(Z)$ sharply enough.  

It is the merit of  Talagrand~\citeyearpar{MR1361756} to extend this approach for the purpose of controlling suprema of empirical processes, that is, when 
$X_{t}$ takes the form $\sum_{i=1}^{n}t(\xi_{i})-\E\pa{t(\xi_{i})}$ with $T$  a set of uniformly bounded functions and $\xi_{i}$ independent random variables. Yet, the original result by Talagrand involved suboptimal numerical constants and many efforts were made to recover it with sharper ones. A first step in this direction is due to Ledoux~\citeyearpar{MR1399224} by mean of nice entropy and tensorisation arguments. Then, further refinements were made on Ledoux's result by Massart~\citeyearpar{MR1782276}, Rio~\citeyearpar{MR1955352} and Bousquet~\citeyearpar{MR1890640}, the latter author achieving the best possible result in terms of constants. Nowadays, these entropy arguments have become a popular way of establishing deviation and concentration inequalities for $Z$ around its expectation. For a nice and complete introduction to these inequalities (and their applications to statistics) we refer the reader to the book by Massart~\citeyearpar{MR2319879}. 

Bousquet's inequality can be recovered (with worse constants) by applying the following result of Klein \& Rio~\citeyearpar{MR2135312} (Theorem~1.1). Actually, we write it in a slightly different form with possibly larger constants.  
\begin{thm}[Klein \& Rio]\label{KR}
For each $t\in T$,  let $\pa{\overline{X}_{i,t}}_{i=1,\ldots,n}$ be independent (but not necessarily i.i.d.) centered random variables with values in $[-c,c]$ and set $X_{t}=\sum_{i=1}^{n}\overline{X}_{i,t}$. For all $u\ge 0$, 
\begin{equation}\label{bern0}
\P\pa{Z\ge \E(Z)+ \sqrt{\pa{2v^{2}+2c\E(Z)}u}+3cu}\le \exp\pa{-u}
\end{equation}
where $v^{2}=\sup_{t\in T}{\rm var}\pa{X_{t}}$.
\end{thm}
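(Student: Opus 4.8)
The statement is, up to the numerical constants, Bousquet's form of Talagrand's concentration inequality for suprema of empirical processes, and as displayed it coincides with Theorem~1.1 of Klein \& Rio; so the plan is either to quote that result and reconcile constants, or to reprove it by the entropy method. I will describe the second, substantive route. Throughout one should keep in mind that the linear term $3cu$ is deliberately generous (the sharp constant being of order $c$), so any slack introduced in the estimates may be absorbed at the end.

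\emph{Reduction and the log-Laplace transform.} First I would take $c=1$ by homogeneity and, by a routine truncation (exhaust $T$ by finite subsets and pass to the limit), reduce to $T$ finite, so that $Z=X_{\hat t}$ for some random index $\hat t=\hat t(\xi_1,\dots,\xi_n)$. It then suffices to bound $L(\lambda):=\log\E\bigl[e^{\lambda(Z-\E(Z))}\bigr]$ for $0\le\lambda<1$ and to conclude by Markov's inequality applied to $e^{\lambda(Z-\E(Z))}$. Introduce $Z^{(i)}:=\sup_{t\in T}\sum_{j\ne i}\overline X_{j,t}$, which does not depend on $\xi_i$; comparing the maximiser of $Z$ with that of $Z^{(i)}$ gives $\overline X_{i,\tilde t_i}\le Z-Z^{(i)}\le\overline X_{i,\hat t}\le 1$, hence $|Z-Z^{(i)}|\le 1$.

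\emph{Tensorisation and the differential inequality.} Using the identity $\lambda L'(\lambda)-L(\lambda)=\mathop{\rm Ent}\nolimits\bigl(e^{\lambda Z}\bigr)/\E\bigl(e^{\lambda Z}\bigr)$ and the tensorisation inequality for entropy, the task reduces to bounding $\sum_{i=1}^{n}\E\bigl[\mathop{\rm Ent}\nolimits_i\bigl(e^{\lambda Z}\bigr)\bigr]$, where $\mathop{\rm Ent}\nolimits_i$ is the entropy in $\xi_i$ with the other coordinates frozen. Applying the elementary variational bound $\mathop{\rm Ent}\nolimits\bigl(e^{W}\bigr)\le\E\bigl[(W-W')e^{W}\bigr]-\E\bigl[e^{W}-e^{W'}\bigr]$ with $W=\lambda Z$ and $W'=\lambda Z^{(i)}$ turns each conditional entropy into $\E_i\bigl[e^{\lambda Z}\,\psi\bigl(-\lambda(Z-Z^{(i)})\bigr)\bigr]$ with $\psi(x)=e^x-1-x$; the convexity estimates $\psi(x)\le x^2/2$ for $x\le 0$ and $\psi(x)\le x^2/\bigl(2(1-x/3)\bigr)$ for $0\le x<3$ then dominate this, using $Z-Z^{(i)}\le\overline X_{i,\hat t}\le 1$, by a constant multiple of $\lambda^2\overline X_{i,\hat t}^{\,2}/(1-\lambda/3)$. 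Summing over $i$, one arrives at a differential inequality of the shape $\lambda L'(\lambda)-L(\lambda)\le\bigl(v^{2}+c\,\E(Z)\bigr)\lambda^{2}/\bigl(2(1-c\lambda)\bigr)$ on $[0,1/c)$ (after reinstating $c$, and possibly enlarging constants).

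\emph{Conclusion and main obstacle.} Since $L(0)=L'(0)=0$, integrating this ODE gives $L(\lambda)\le\bigl(v^{2}+c\,\E(Z)\bigr)\lambda^{2}/\bigl(2(1-c\lambda)\bigr)$, and the classical Cramér computation for this sub-gamma régime — Markov's inequality plus optimisation of $\lambda x-L(\lambda)$ over $\lambda\in[0,1/c)$ — yields $\P\bigl(Z\ge\E(Z)+\sqrt{(2v^{2}+2c\,\E(Z))u}+cu\bigr)\le e^{-u}$, a fortiori the claimed bound with $3cu$ in place of $cu$. The hard part is the step hidden in the previous paragraph: the index $\hat t$ is random and correlated with the $\xi_i$, so $\sum_i\E\bigl[e^{\lambda Z}\overline X_{i,\hat t}^{\,2}\bigr]$ cannot be dealt with by merely freezing $\hat t$; it must be controlled by a self-bounding argument which reproduces $\E\bigl(e^{\lambda Z}\bigr)$ on the right-hand side and thereby generates the $\E(Z)$ term in the variance proxy with the correct coefficient. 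This is exactly Bousquet's estimate, and the place where his inequality improves on the earlier bounds of Massart and Rio; everything else (truncation, tensorisation, the scalar inequalities for $\psi$, the ODE comparison, the Legendre transform) is routine bookkeeping. Finally, if one prefers to invoke Theorem~1.1 of Klein \& Rio directly, the only remaining point is to check that $u\mapsto\sqrt{(2v^{2}+2c\,\E(Z))u}+3cu$ dominates their deviation function, which is immediate since it is increasing in the coefficient of the linear term.
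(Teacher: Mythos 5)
You correctly identify that this is, up to constants, Klein \& Rio's Theorem~1.1 and that one can either cite it or reprove it from scratch. The paper takes the first route and supplies no proof at all: Theorem~\ref{KR} is introduced as ``the following result of Klein \& Rio (Theorem~1.1)'' written ``in a slightly different form with possibly larger constants,'' and that citation is the entirety of the paper's justification. So there is nothing in the paper for your blind attempt to match against.

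Taken as a free-standing proof, your substantive route has a gap that you yourself flag. The entropy/tensorisation scheme is the right one, but the step you call the ``main obstacle''---converting $\sum_{i}\E\bigl[e^{\lambda Z}\,\psi\bigl(-\lambda(Z-Z^{(i)})\bigr)\bigr]$ into a bound with variance proxy $v^{2}+2c\,\E(Z)$---is not peripheral. The self-bounding argument that controls $\sum_{i}\E\bigl[e^{\lambda Z}\,\overline X_{i,\hat t}^{\,2}\bigr]$, with the random index $\hat t$ correlated with every $\xi_{i}$, \emph{is} the theorem: it is precisely the content of Bousquet's and Klein--Rio's contributions over the earlier entropy-method bounds. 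Appealing to ``exactly Bousquet's estimate'' inside what is meant to be a proof of Bousquet's estimate is circular; without carrying out that step, the differential inequality you write down has not been established and the rest of the ODE/Legendre machinery has nothing to act on.

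Your closing remark on the citation route is also too quick. Klein \& Rio's Theorem~1.1 is a bound on the log-Laplace transform of $Z-\E(Z)$, not a ready-made deviation inequality, so one must still run the Chernoff computation. Moreover, the deviation bound it most naturally yields carries the variance factor $v^{2}+2c\,\E(Z)$ inside a $\sqrt{2(\cdot)u}$, i.e.\ $\sqrt{(2v^{2}+4c\,\E(Z))u}$, paired with a linear term smaller than $3cu$; trading a larger square-root coefficient for a larger linear coefficient is not ``immediate by monotonicity,'' since for small $u$ the square-root term dominates and the comparison reverses. The paper's hedge ``possibly larger constants'' is precisely an acknowledgement that its display is a reorganisation rather than a pointwise weakening of the sharper bound. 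If you wish to cite and conclude, rederive the display directly from Klein \& Rio's Laplace bound by an explicit choice of $\lambda$, rather than asserting domination.
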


This inequality should be compared to Bernstein's inequality that we recall below (see also Massart~\citeyearpar{MR2319879} for related conditions). Indeed, it can be shown that a sum $X$ of independent centered random variables $X_{i}=\overline{X}_{i}$ with values in $[-c,c]$ for $i=1,\ldots,n$ do satisfy the Condition~\eref{bern-moment} below with $v^{2}={\rm var}\pa{X}$. Consequently, Inequality~\eref{bern0} generalizes Bernstein's (with worse constants) to suprema of countable families of such $X$. 

\begin{thm}[Bernstein's inequality]
Let $X_{1},\ldots,X_{n}$ be independent random variables and set $X=\sum_{i=1}^{n }\pa{X_{i}-\E(X_{i})}$. Assume that there exist nonnegative numbers  $v,c$ such that for all $k\ge 3$
\begin{equation}\label{bern-moment}
\sum_{i=1}^{n}\E\cro{\ab{X_{i}}^{k}}\le {k!\over 2}v^{2}c^{k-2}
\end{equation}
Then,  for all $u\ge 0$
\begin{equation}\label{bern}
\P\pa{X \ge \sqrt{2v^{2}u}+cu}\le e^{-u}.
\end{equation}
Besides, for all $x\ge 0$, 
\begin{equation}\label{bern01}
\P\pa{X\ge x}\le \exp\pa{-{x^{2}\over 2(v^{2}+cx)}}.
\end{equation}
\end{thm}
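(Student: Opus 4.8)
The plan is to run the classical Cram\'er--Chernoff argument. For $\lambda\ge0$, Markov's inequality applied to $e^{\lambda X}$ gives $\P(X\ge t)\le e^{-\lambda t}\,\E\cro{e^{\lambda X}}$, and by independence $\E\cro{e^{\lambda X}}=\prod_{i=1}^{n}\E\cro{e^{\lambda(X_{i}-\E(X_{i}))}}$, so the first task is to bound the log-Laplace transform $\psi(\lambda):=\log\E\cro{e^{\lambda X}}=\sum_{i=1}^{n}\log\E\cro{e^{\lambda(X_{i}-\E(X_{i}))}}$. Using $\log(1+y)\le y$ on each factor, the centering $\E\cro{X_{i}-\E(X_{i})}=0$, and $\E\cro{Y^{k}}\le\E\cro{\ab{Y}^{k}}$, one gets $\psi(\lambda)\le\sum_{i=1}^{n}\sum_{k\ge2}\tfrac{\lambda^{k}}{k!}\,\E\cro{\ab{X_{i}-\E(X_{i})}^{k}}$; interchanging the (nonnegative) sums, using the moment bounds $\sum_{i}\E\cro{\ab{X_{i}-\E(X_{i})}^{k}}\le\tfrac{k!}{2}v^{2}c^{k-2}$ for $k\ge2$ (i.e.\ \eref{bern-moment} for the centered summands, the case $k=2$ reading $\var(X)\le v^{2}$), and summing $\sum_{k\ge2}(c\lambda)^{k-2}=(1-c\lambda)^{-1}$, one reaches the key bound
\[
\psi(\lambda)\le\frac{v^{2}\lambda^{2}}{2(1-c\lambda)}\qquad\text{for all }0\le\lambda<1/c .
\]
Combined with Chernoff's bound this produces the master inequality $\P(X\ge t)\le\exp\!\pa{\frac{v^{2}\lambda^{2}}{2(1-c\lambda)}-\lambda t}$, valid for every $t\ge0$ and every $\lambda\in[0,1/c)$.

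It then remains only to choose $\lambda$ well in this master inequality. To get~\eref{bern} I would maximize, i.e.\ compute the Legendre transform $\psi^{*}(t)=\sup_{0\le\lambda<1/c}\bigl(\lambda t-\tfrac{v^{2}\lambda^{2}}{2(1-c\lambda)}\bigr)$; solving $\psi'(\lambda)=t$ one finds $\psi^{*}(t)=\tfrac{v^{2}}{c^{2}}\,h\!\pa{ct/v^{2}}$ with $h(w)=1+w-\sqrt{1+2w}$, and since $h(w)=b$ is solved by $w=b+\sqrt{2b}$, one checks $\psi^{*}\bigl(cu+\sqrt{2v^{2}u}\bigr)=u$; as $\psi^{*}$ is nondecreasing, taking $t=\sqrt{2v^{2}u}+cu$ gives~\eref{bern}. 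To get~\eref{bern01} I would instead use the sub-optimal but explicit value $\lambda=x/(v^{2}+cx)\in[0,1/c)$: a one-line computation gives $1-c\lambda=v^{2}/(v^{2}+cx)$, so the exponent in the master inequality collapses to exactly $-x^{2}/\bigl(2(v^{2}+cx)\bigr)$.

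The only genuinely delicate step is the log-Laplace bound: one must justify the termwise expansion of the exponential and the interchange of the two summations (a Tonelli argument on a nonnegative double series, legitimate because $c\lambda<1$ makes all series converge), and one must keep in mind that \eref{bern-moment} is applied to the centered summands $X_{i}-\E(X_{i})$ of $X$ (which costs nothing in the bounded case discussed before the statement). Everything downstream of the log-Laplace bound is elementary one-variable calculus, the only care needed being the bookkeeping of constants so that the two conclusions come out precisely as written; I do not anticipate any real obstacle, this being the textbook route.
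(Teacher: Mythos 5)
The paper states Bernstein's inequality only as a recalled classical result and gives no proof, so there is nothing internal to compare against; your Cram\'er--Chernoff route (bound $\log\E e^{\lambda X}$ termwise, sum the geometric series to reach $\psi(\lambda)\le v^{2}\lambda^{2}/(2(1-c\lambda))$, then optimize over $\lambda$) is the textbook argument, and the calculus is right --- the Legendre transform computation for~\eref{bern} via $h(w)=1+w-\sqrt{1+2w}$ and the explicit choice $\lambda=x/(v^{2}+cx)$ for~\eref{bern01} both verify.

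The one issue you flag but treat as bookkeeping is in fact a genuine gap in the hypotheses as printed, and you should say so. Your derivation needs $\sum_{i}\E\cro{\ab{X_{i}-\E(X_{i})}^{k}}\le\tfrac{k!}{2}v^{2}c^{k-2}$ for all $k\ge 2$, and the $k=2$ case, $\var(X)\le v^{2}$, is \emph{not} implied by \eref{bern-moment}, which only constrains moments for $k\ge 3$. Without it the stated conclusion can fail: take $X_{i}=\pm a$ i.i.d.\ symmetric, $c=a$, $v^{2}=na^{2}/3$; then \eref{bern-moment} holds for every $k\ge 3$ (with equality at $k=3$), yet by the CLT $\P\pa{X\ge\sqrt{2v^{2}u}+cu}\to\P\pa{Z\ge\sqrt{2u/3}\,}$ as $n\to\infty$, which for $u=10$ is about $5\cdot 10^{-3}\gg e^{-10}$, contradicting~\eref{bern}. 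So $\var(X)\le v^{2}$ (equivalently, \eref{bern-moment} extended to $k=2$) is a missing hypothesis, not a remark; in the paper's own discussion and applications $v^{2}$ is taken to be $\var(X)$ or larger, so nothing downstream is affected. By contrast, the transfer of the $k\ge3$ moment bound from $X_{i}$ to $X_{i}-\E(X_{i})$ really is harmless (it only inflates $v$ and $c$ by absolute constants, and is vacuous when the $X_{i}$ are already centered, as in the paper's use). Once the $k=2$ bound is added to the hypotheses, your proof is complete and correct.
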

In the literature, \eref{bern-moment} together with the fact that the $X_{i}$ are independent is sometime replaced by the weaker condition 
\begin{equation}\label{exp}
\E\pa{e^{\lambda X}}\le \exp\cro{{\lambda^{2}v^{2}\over 2(1-\lambda c)}},\ \ \ \ \forall \lambda\in (0,c).
\end{equation}
In this paper, we shall mainly deal with this type of assumption which has the advantage to depend on the law of $X$ only. 

Looking at condition~\eref{exp}, a natural question arises. Is it possible to establish an analogue of Klein \& Rio's result when one replaces the assumption that the ${\overline{X}_{i,t}}$ belong to $[-c,c]$ by a suitable assumption on $T$ and the Laplace transforms of the $X_{t}$? An attempt at solving this problem can be found in Bousquet~\citeyearpar{MR2073435}. There, the author considered the case $X_{t}=\sum_{i=1}^{n}\xi_{i}t_{i}$ where the $T$ is a subset of $[-1,1]^{n}$ and the $\xi_{i}$ independent and centered random variables satisfying
\begin{equation}\label{moment}
\E\cro{\ab{\xi_{i}}^{k}}\le {k!\over 2}\sigma^{2}c^{k-2},\ \ \forall \ k\ge 2
\end{equation}
which implies~\eref{exp} with $v^{2}=v^{2}(t)=\ab{t}_{2}^{2}\sigma^{2}$. Unfortunately, it turns that the result by Bousquet provides an analogue of~\eref{bern0} with $v^{2}$ replaced by $n\sigma^{2}$ although one would expect the smaller quantity $v^{2}=\sup_{t\in T}v^{2}(t)$. 

\subsection{Chi-square type random variables and model selection}
Originally, this result by Bousquet above was motivated by a statistical application. In order to give an account of how such processes arise in Statistics, consider the problem of estimating $f$ from the  observation of the random vector $Y=f+\xi$ in $\R^{n}$. Given a linear subspace $S$ of $\R^{n}$, the classical least-squares estimator of $f$ in $S$ is given by $\hat f=\Pi_{S}Y=\Pi_{S}f+\Pi_{S}\xi$ where $\Pi_{S}$ denotes the orthogonal projector onto $S$. Since the Euclidean (squared) distance beween $f$ and $\hat f$ decomposes as $\ab{f-\hat f}_{2}^{2}=\ab{f-\Pi_{S}f}_{2}^{2}+\ab{\Pi_{S}\xi}_{2}^{2}$, the study of the quadratic loss $\ab{f-\hat f}_{2}^{2}$ requires that of its random component $\ab{\Pi_{S}\xi}_{2}^{2}$. This quantity is usually called a $\chi^{2}$-type variable by analogy to the Gaussian case. Its study is connected to that of $Z$ by the formula
\[
\ab{\Pi_{S}\xi}_{2}=\sup_{t\in T}\sum_{i=1}^{n}\xi_{i}t_{i}=Z,
\]
where $T$ is countable and dense subset of the (Euclidean) unit ball of $S$. The control of such random variables is fundamental to perform model selection from the observation of $Y$ in the regression setting. When the $\xi_{i}$ admit few finite moments only, a control of such a $Z$ can be found in Baraud~\citeyearpar{MR1777129} by mean of a Rosenthal's type inequality.  By using chaining techniques, Baraud, Comte \& Viennet~\citeyearpar{MR1845321} handled the case of sub-Gaussian  $\xi_{i}$. The Gaussian case was studied by Birg\'e \& Massart~\citeyearpar{MR1848946} by using the concentration Inequality~\eref{gaussien}. More recently, Sauv\'e~\citeyearpar{MSauve} considered $\xi_{i}$ which satisfy~\eref{moment}. She discussed the fact that the inequality obtained in Bousquet~\citeyearpar{MR2073435} was unfortunately inadequate for controlling $\ab{\Pi_{S}\xi}_{2}^{2}$ and she solved the problem when $S$  consists of vectors the components of which are constant on each element of a given partition. 

\subsection{What is this paper about?}
In this paper, our motivations are twofold. First, we present an exponential bound for the probability of deviation of $Z=\sup_{i\in T}X_{t}$ under a suitable bound on the Laplace transform of the increments $X_{t}-X_{s}$ with $s,t\in T$. Our approach is inspired by that described in the book of Talagrand~\citeyearpar{MR2133757} for evaluating the expectations of suprema of random variables.   
Talagrand's approach relies on the idea of decomposing $T$ into partitions rather than into nets as it was usually done before. By using such a technique, the inequalities we get suffer from the usual drawback that the numerical constants are non-optimal but at least they allow a suitable control of $\chi^{2}$-type random variables over more general linear spaces $S$ than those considered in Sauv\'e~\citeyearpar{MSauve}. Second, we shall apply these inequalities for the purpose of selecting an appropriate least-squares estimator among a (possibly exponentially large) collection of candidate ones. If one excepts the case of histogram-type estimators, it seems that performing model selection in this context under the assumption that the errors satisfy~\eref{moment} is new. Besides, unlike Sauv\'e~\citeyearpar{MSauve}, our estimation procedure does not assume that an upper bound for the sup-nom of the regression function is known. 

The paper is organized as follows. We present our deviation bound for $Z$ in Section~\ref{sect:I}. We give an application to Statistics in Section~\ref{sect:stats}. We perform  model selection for the purpose of estimating the mean of a random vector. We shall restrict there to collections of models based on linear spans of piecewise or trigonometric polynomials. The case of more general linear spaces will be considered in Section~\ref{sec:unifions}. Section~\ref{Proof} is devoted to the proofs. 

Along the paper we shall assume that $n\ge 2$ and use the following notations. We denote by $e_{1},\ldots,e_{n}$ the canonical basis of $\R^{n}$ which we endow with the Euclidean inner product denoted $\<.,.\>$. For $x\in\R^{n}$, we set
\[
|x|_{2}=\sqrt{\<x,x\>},\ \  |x|_{1}=\sum_{i=1}^{n}|x_{i}|\ \ {\rm and}\ \ |x|_{\infty}=\max_{i=1,\ldots,n}|x_{i}|.
\]
The linear span of a family $u_{1},\ldots,u_{k}$ of vectors is denoted by ${\rm Span\!}\ac{u_{1},\ldots,u_{k}}$. The quantity $|I|$ is the cardinality of a finite set $I$.  Finally, $\kappa$ denotes the numerical constant $18$. It appears in the control of the deviation of $Z$ when applying Talagrand's chaining argument. As a consequence, it will appear all along the paper and it seems to us interesting to stress up how this constant is involved in the statistical procedure we propose. 

\section{A Talagrand-type Chaining argument for controlling suprema of random variables}\label{sect:I}

Let $\pa{X_{t}}_{t\in T}$ be a family of real valued and centered random variables indexed by a countable and nonempty set $T$.  Fix some $t_{0}$ in $T$ and set
\[
Z=\sup_{t\in T}\pa{X_{t}-X_{t_{0}}}\ \ \ {\rm and}\ \ \ \overline{Z}=\sup_{t\in T}\ab{X_{t}-X_{t_{0}}}.
\]
Our aim is to give a probabilistic control of the deviations of $Z$ (and $\overline Z$). We make the following assumptions
\begin{Ass}\label{momexp}
There exists two distances $d$ and $\delta$ on $T$ and a nonnegative constant $c$ such that for all $s,t\in T$ ($s\ne t$) 
\begin{equation}\label{debase}
\E\cro{e^{\lambda(X_{t}-X_{s})}}\le \exp\cro{{\lambda^{2}d^{2}(s,t)\over 2(1-\lambda c\delta(s,t))}},\ \ \forall \lambda\in\left[0, {1\over c\delta(s,t)}\right)
\end{equation}
with the convention $1/0=+\infty$.
\end{Ass}
The case $c=0$ corresponds to the situation where the increments of the process $X_{t}$ are {\it sub-Gaussian}. 

In this section, we also assume that $d$ and $\delta$ derive from norms. This is the only case we need to consider to handle the statistical problem described in Section~\ref{sect:stats}. Nevertheless, a more general result with arbitrary distances can be found in Section~\ref{Proof}. 

\begin{Ass}\label{lineaire}
Let $S$ be a linear space $S$ with dimension $D<+\infty$ endowed with two arbitrary norms denoted $\|\ \|_{2}$ and $\|\ \|_{\infty}$ respectively. The set $T$ is a subset of $S$ and for all $s,t\in T$, $d(s,t)=\|t-s \|_{2}$ and $\delta(s,t)=\|s-t \|_{\infty}$. Besides, 
\[
T\subset \ac{t\in S\ \telque\ \|t-t_{0}\|_{2}\le v,\ \ c\|t-t_{0}\|_{\infty}\le b}.
\]
\end{Ass}

Then, the following result holds. 
\begin{thm}\label{norm}
Under Assumptions~\ref{momexp} and~\ref{lineaire}, 
\begin{equation}\label{svanorm}
\P\cro{Z\ge \kappa\pa{\sqrt{v^{2}(D+x)}+b(D+x)}}\le e^{-x},\ \ \forall x\ge 0
\end{equation}
with $\kappa=18$. Moreover 
\begin{equation}\label{vanorm}
\P\cro{\overline{Z}\ge \kappa\pa{\sqrt{v^{2}(D+x)}+b(D+x)}}\le 2e^{-x},\ \ \forall x\ge 0.
\end{equation}
\end{thm}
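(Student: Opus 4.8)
The plan is to implement Talagrand's generic-chaining scheme adapted to the Bernstein-type increment bound in Assumption~\ref{momexp}. First I would fix a sequence of finite approximating sets $T_k \subset T$ with $|T_0| = 1$ (containing $t_0$) and $|T_k| \le 2^{2^k}$, together with maps $\pi_k : T \to T_k$, built by iterated $2^{-k}$-net constructions of the ball in Assumption~\ref{lineaire} with respect to the two norms $\|\ \|_2$ and $\|\ \|_\infty$ simultaneously; since $S$ has finite dimension $D$, a standard volumetric estimate gives that such nets exist with $\log|T_k| \lesssim D 2^k$ and with the radii of the successive "rings" controlled by $\|\pi_k(t)-\pi_{k-1}(t)\|_2 \lesssim v\, 2^{-k}$ and $\|\pi_k(t)-\pi_{k-1}(t)\|_\infty \lesssim (b/c)\, 2^{-k}$. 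Then for every $t$ I write the telescoping decomposition $X_t - X_{t_0} = \sum_{k\ge 1}\pa{X_{\pi_k(t)} - X_{\pi_{k-1}(t)}}$, so that $Z \le \sum_{k\ge 1}\sup_{t\in T}\pa{X_{\pi_k(t)} - X_{\pi_{k-1}(t)}}$, the $k$-th supremum ranging over at most $|T_k|\cdot|T_{k-1}| \le 2^{2^{k+1}}$ increments.

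Next I would apply, to each individual increment $X_{\pi_k(t)} - X_{\pi_{k-1}(t)}$, the one-dimensional Bernstein tail bound \eref{bern} (which follows from \eref{debase} exactly as \eref{bern} follows from \eref{exp}), with variance proxy $d^2 \lesssim (v 2^{-k})^2$ and scale $c\delta \lesssim b 2^{-k}$. A union bound over the $2^{2^{k+1}}$ pairs, at deviation level $u_k = a 2^k + x$ for a suitable absolute constant $a$, yields that with probability at least $1 - 2^{-2^k} e^{-x}$ (say), the $k$-th partial supremum is bounded by $C\pa{v 2^{-k}\sqrt{2^k + x} + b 2^{-k}(2^k + x)}$. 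Summing over $k \ge 1$: the geometric factors $2^{-k}$ make $\sum_k v 2^{-k}\sqrt{2^k+x}$ collapse (by Cauchy–Schwarz, or by splitting $\sqrt{2^k+x}\le \sqrt{2^k}+\sqrt x$) to something of order $\sqrt{v^2(1+x)} \lesssim \sqrt{v^2(D+x)}$, and similarly $\sum_k b 2^{-k}(2^k+x) \lesssim b(1 + x) \lesssim b(D+x)$; summing the failure probabilities gives $\sum_k 2^{-2^k} e^{-x} \le e^{-x}$. Tracking the constants through the net cardinalities (the $D 2^k$ term in $\log|T_k|$ is what forces the $D$ inside the bracket) and through the Bernstein bound is precisely where the explicit value $\kappa = 18$ is extracted; I would not attempt to optimize it. This proves \eref{svanorm}; then \eref{vanorm} follows by applying \eref{svanorm} both to $(X_t)$ and to $(-X_t)$ — which satisfies the same Assumption~\ref{momexp} since \eref{debase} is not symmetric in $\lambda$ but does hold for the reversed increments — and adding the two failure probabilities, giving the factor $2$.

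The main obstacle I expect is not the chaining bookkeeping itself but getting the \emph{simultaneous} two-norm net construction to interact correctly with the Bernstein bound: one needs the ring radii in $\|\ \|_2$ and in $\|\ \|_\infty$ to decay geometrically at the \emph{same} rate $2^{-k}$ while keeping $\log|T_k| \lesssim D 2^k$, so that after plugging into \eref{bern} the variance term and the linear term both carry the factor $2^{-k}$ that makes the series converge. A secondary delicate point is the choice of $u_k$: it must be large enough ($\gtrsim 2^k$, to beat the $\log|T_k| \sim D2^k$ entropy in the union bound) yet its contribution after multiplication by $2^{-k}$ and summation must still only produce an $O(D+x)$ term rather than, say, $O(D^2)$; this is why the linear-in-$k$ growth $u_k = a2^k + x$ (not $u_k = a 4^k$) is the right normalization. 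Once these two points are set up, the remaining estimates are the routine geometric-series manipulations sketched above.
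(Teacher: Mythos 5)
Your overall chaining plan is the right one and matches the spirit of the paper's proof (the paper proves a general abstract result, Theorem~\ref{chi2}, for nested partitions and then instantiates it with a ball-covering bound, Proposition~\ref{entropie}), but there is a genuine and fatal inconsistency in your bookkeeping of net sizes, and it is exactly the inconsistency that makes your intermediate bound come out suspiciously too good.

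You simultaneously require $|T_k|\le 2^{2^k}$ (the Talagrand admissible-sequence cardinality) and ring radii $\|\pi_k(t)-\pi_{k-1}(t)\|_2\lesssim v\,2^{-k}$. For a $D$-dimensional ball these two requirements are incompatible when $D$ is large: the standard volumetric estimate (Proposition~\ref{entropie}) says that covering a radius-$v$ ball by balls of radius $v\,2^{-k}$ requires roughly $(1+2^{k+1})^D$ pieces, i.e. $\log|T_k|\asymp Dk$, \emph{linear} in $k$ and proportional to $D$ — not $\lesssim 2^k$, and also not $\lesssim D\,2^k$ as you wrote. In particular, for $k=1,\dots, O(\log D)$ a set of cardinality $2^{2^k}$ cannot make the ring radius shrink below essentially the full diameter. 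The paper builds nested partitions with $|\A_k|\le(1.8)^{2D}\,5^{2kD}$, matching the correct $Dk$ rate. Once you use the correct count, the union bound forces the deviation levels to be $u_k\gtrsim Dk+x$ rather than your $u_k=a2^k+x$; the geometric sums then produce $\sum_k v2^{-k}\sqrt{Dk+x}\lesssim v(\sqrt D+\sqrt x)$ and $\sum_k b2^{-k}(Dk+x)\lesssim b(D+x)$, which is precisely the $\sqrt{v^2(D+x)}+b(D+x)$ of the theorem.

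Two symptoms of the error are visible in your own computation: (i) your summation yields $\sqrt{v^2(1+x)}$ with no $\sqrt D$ — but $\E Z$ genuinely grows like $v\sqrt D$ in the Gaussian case (the unit ball of a $D$-dimensional $S$), so any bound without $\sqrt D$ cannot be right; and (ii) your claim $\sum_k b\,2^{-k}(2^k+x)\lesssim b(1+x)$ is false since $\sum_k b\,2^{-k}\cdot 2^k=\sum_k b$ is a divergent series (or equals $bK$ where $K$ is the, unbounded, number of chaining levels). Both symptoms disappear once $\log|T_k|\asymp Dk$ and $u_k\asymp Dk+x$ are put in. Your closing remark about deducing \eref{vanorm} from \eref{svanorm} applied to $(X_t)$ and $(-X_t)$ is correct and is also how the paper argues.
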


If $T$ is no longer countable but admits a countable dense subset $T'$ (with respect to $\|\ \|_{2}$ or $\|\ \|_{\infty}$, both norms being equivalent on $S$) and if  the paths $t\mapsto X_{t}$ are continuous with probability 1, Theorem~\ref{norm} still holds since 
\[
\sup_{t\in T}\pa{X_{t}-X_{t_{0}}}=\sup_{t\in T'}\pa{X_{t}-X_{t_{0}}} \ \ \ a.s..
\]
Let us now turn to some examples. In the sequel, we take $t_{0}=0$, $T\subset \R^{n}$ and $X_{t}=\<\xi,t\>$ where the random vector $\xi=(\xi_{1},\ldots,\xi_{n})$ has independent and centered components. 

\subsubsection*{Comparison with the (sub)Gaussian case } 
Assume that for some $a>0$
\begin{equation}\label{subg}
\max_{i=1,\ldots,n}\log\E\cro{e^{\lambda \xi_{i}}}\le {\lambda^{2}a^{2}\over 2},\ \ \ \ \forall \lambda\in \R.
\end{equation}
This assumption holds when the $\xi_{i}$ are Gausian with mean 0 and variance $a^{2}$ or when the $\xi_{i}$ are bounded by $a$ for example. Consider some linear subspace $S$ of $\R^{n}$ with dimension $D$ and $T$ the Euclidean ball of $S$ centered at 0 of radius $r>0$. It follows from~\eref{subg} that Assumptions~\ref{momexp} and~\ref{lineaire} hold with $c=0$, $b=0$, $d(s,t)=\|t-s\|_{2}=a\ab{t-s}_{2}$ and $v=a r$. On the one hand, we obtain from Theorem~\ref{norm} the inequality
\begin{equation}\label{b1}
\P\cro{Z\ge \kappa ar\pa{\sqrt{D}+\sqrt{x}}}\le \P\cro{Z\ge \kappa ar\sqrt{D+x}}\le e^{-x},\ \forall x\ge 0.
\end{equation}
In view of commenting this bound, let us compare it to Inequality~\eref{gaussien} when the $\xi_{i}$ are Gaussian. In this case,  $\sup_{t\in T}{\rm var}(X_{t})=a^{2}r^{2}$ and since $Z^{2}/(a r)^{2}$ is a $\chi^{2}$ random variables with $D$ degrees of freedom, $\E(Z)\le \E^{1/2}(Z^{2})\le ar\sqrt{D}$. Hence, Inequality~\eref{gaussien} give,  on the other hand,
\[
\P\cro{Z\ge ar\pa{\sqrt{D}+\sqrt{x}}}\le e^{-x}.
\]
Except for the numerical constant $\kappa$, we see that this bound is comparable to~\eref{b1}. One could argue that the original bound~\eref{gaussien} is better since we have replaced $\E(Z)$ by the upper bound $ar\sqrt{D}$ but in fact, it can easily be checked that this quantity gives the right order of magnitude of $\E(Z)$ since $\E(Z)\ge ar\sqrt{2\pi^{-1}D}$.

\subsubsection*{Comparison with Inequalities~\eref{bern} and~\eref{KR}}
Assume now that $\xi$ satisfies for some positive numbers $\sigma$ and $c$, 
\begin{equation}\label{subg2}
\max_{i=1,\ldots,n}\log\E\cro{e^{\lambda \xi_{i}}}\le {\lambda^{2}\sigma^{2}\over 2(1-|\lambda| c)},\ \ \ \ \forall \lambda\in (-1/c,1/c).
\end{equation}
As a first simple example, let us take $S={\rm Span\!}\ac{\1}$ where $\1=(1,\ldots,1)'\in\R^{n}$ and $T=\ac{\lambda\1,\ \lambda\in[-1,1]}$. Under~\eref{subg2}, Assumptions~\ref{momexp} and~\ref{lineaire} hold with $d(s,t)=\|s-t\|_{2}=\sigma\ab{t-s}_{2}$, $\delta(s,t)=\|s-t\|_{\infty}=\ab{s-t}_{\infty}=\max_{i=1,\ldots,n}\ab{s_{i}-t_{i}}$,
$v^{2}=n$ and $b=c$. We can therefore apply Theorem~\ref{norm} and get,
\begin{equation}\label{eq4}
\P\cro{Z\ge \kappa\pa{\sqrt{n(1+x)\sigma^{2}}+c(1+x)}}\le e^{-x},\ \ \forall x\ge 0.
\end{equation}
On the other hand, for such a set $T$, $Z$ is merely $\ab{\<\xi,\1\>}=\ab{\sum_{i=1}^{n}\xi_{i}}$ and by using Bernstein's Inequality~\eref{bern} twice (with $\xi$ and $-\xi$) and $u=x+\log(2)$, we derive
\[
\P\cro{Z\ge \sqrt{n(\log(2)+x)\sigma^{2}}+c(\log(2)+x)}\le e^{-x},\ \ \forall x\ge 0.
\]
This bound is comparable to~\eref{eq4}.

Let us now take $S$ as any linear subspace of $\R^{n}$ of dimension $D$,
\[
T= \ac{t\in S\ \telque\ \|t\|_{2}\le v,\ \ c\|t\|_{\infty}\le 1}
\]
and assume $\sigma=1$ for simplicity. When $|\xi_{i}|\le c$ for all $i$, we can compare our Inequality~\eref{svanorm} to that of Klein \& Rio (Inequality~\eref{bern0}) since the assumptions of Theorem~\ref{KR} and~\ref{norm} are both satisfied. On the one hand, the inequality by Klein \& Rio gives that with probability at least $1-e^{-x}$,  $Z\le z(x)$ where
\[
z(x)= \E(Z)+ \sqrt{\pa{2v^{2}+2c\E(Z)}x}+2cx.
\]
The concavity of $\log$ together with the elementary inequality $2ab\le a^{2}+b^{2}$ lead to the following upper and lower bounds for $z(x)$
\begin{eqnarray*}
\E(Z)+ \sqrt{2v^{2}x}+cx\le &z(x)&\le 3\pa{\E(Z)+ \sqrt{2v^{2}x}+cx}
\end{eqnarray*}
On the other hand, our inequality gives that with probability at least $1-e^{-x}$, $Z\le \kappa w(x)$ where
\[
w(x)=\sqrt{v^{2}(D+x)}+c(D+x)
\]
and similar computations yield
\begin{eqnarray*}
{1\over 2}\pa{\sqrt{Dv^{2}}+cD+\sqrt{v^{2}x}+cx}\le  &w(x)&\le \sqrt{Dv^{2}}+cD+\sqrt{v^{2}x}+cx.
\end{eqnarray*}
Except for the numerical constants, we see that the main difference between Klein \& Rio's Inequality and ours essentially lies in the fact that $\E(Z)$ is replaced by $E=\sqrt{Dv^{2}}+cD$. It follows from Cauchy-Schwarz's Inequality that 
\[
\E(Z)\le \sqrt{Dv^{2}}<E=\sqrt{Dv^{2}}+cD,
\]
showing that our bound $w(x)$ involves an upper bound for $\E(Z)$. Under the only assumption that $\xi$ satisfy~\eref{subg2}, the problem of replacing $E$ by $\E(Z)$ remains open. Nevertheless, the term $\sqrt{Dv^{2}}$ turns to be of order $\E(Z)$ in typical situations (think of the Gaussian case) and our bound becomes then comparable 
 to that given by Klein \& Rio as soon as $c^{2}D\le v^{2}$. This turns to be enough to derive deviations bounds for $\chi^{2}$-type random variables in many situations of interest as we shall see in Section~\ref{sect-chi}.

\section{An application to model selection in the regression framework}\label{sect:stats}
Let $Y$ be a random vector of $\R^{n}$ with independent components. In this section, our aim is to estimate $f=\E(Y)$ under the assumption that the components of the noise $\eps=Y-f$ satisfy 
\begin{equation}\label{bernstein}
\log\E\cro{e^{\lambda \eps_{i}}}\le {\lambda^{2}\sigma^{2}\over 2(1-|\lambda| c)},\ \ \forall \lambda\in(-1/c,1/c),\ \ i=1,\ldots,n
\end{equation}
for some known positive numbers $\sigma$ and $c$. 
Inequality~\eref{bernstein} holds for a large class of distributions (once suitably centered) including Poisson, exponential, Gamma... Besides,~\eref{bernstein} is fulfilled when the $\xi_{i}$ satisfy~\eref{moment}.

Our estimation strategy is based on model selection. We start with a (possibly large) collection $\ac{S_{m},\ m\in\M}$ of linear subspaces (models) of $\R^{n}$ and associate to each of these the least-squares estimators $\hat f_{m}=\Pi_{S_{m}}Y$. Given a penalty function $\pen$ from $\M$ to $\R_{+}$, we define the penalized criterion ${\rm crit}(.)$ on  $\M$ by
\begin{equation}\label{crit}
{\rm crit}(m)=\ab{Y-\hat f_{m}}_{2}^{2}+\pen(m).
\end{equation}
In this section, we propose to establish risk bounds for the estimator of $f$ given by $\hat f_{\hat m}$ where the index $\hat m$ is selected from the data among $\M$ as any minimizer of  ${\rm crit}(.)$. 

In the sequel, the penalty $\pen$ will be based on some {\it a priori} choice of nonnegative numbers $\ac{\Delta_{m},\ m\in\M}$ for which we set 
\[
\Sigma=\sum_{m\in\M} e^{-\Delta_{m}}<+\infty.
\]
When $\Sigma=1$, the choice of the $\Delta_{m}$ can be viewed as that of a prior distribution on the models $S_{m}$. For related conditions and their interpretation, see Barron and Cover~\citeyearpar{MR1111806} or  Barron {\it et al}~\citeyearpar{MR1679028}.

In the following sections, we give an account of our main result (to be presented in Section~\ref{sect:main}) for some typical collections of linear spaces $\ac{S_{m},\ m\in\M}$.

\subsection{Selecting among histogram-type estimators}
For a partition $m$ of $\ac{1,\ldots,n}$, $S_{m}$ denotes  the linear span of vectors of $\R^{n}$ the coordinates of which are constants on each element $I$ of $m$. In the sequel, we shall restrict to partitions $m$ the elements of which consist of consecutive integers. 

Consider a partition  $\mathfrak{m}$ of $\ac{1,\ldots,n}$ and $\M$ a collection of partitions $m$ such that $S_{m}\subset S_{\mathfrak{m}}$. We obtain the following result.

\begin{prop}\label{histo}
Let $a,b>0$. Assume that
\begin{equation}\label{condhisto}
|I|\ge a^{2}\log^{2}(n),\ \ \forall I\in \mathfrak{m}.
\end{equation}
If for some $K>1$, 
\begin{equation}\label{p1}
\pen(m)\ge K\kappa^{2}\pa{\sigma^{2}+2c{(\sigma+c)(b+2)\over a\kappa}}\pa{|m|+\Delta_{m}},\ \ \forall m\in\M.
\end{equation}
the estimator $\hat f_{\hat m}$ satisfies 
\begin{equation}\label{inehisto}
\E\pa{\ab{f-\hat f_{\hat m}}_{2}^{2}}\le C(K)\cro{\inf_{m\in\M}\cro{\E\pa{\ab{f-\hat f_{m}}_{2}^{2}}+\pen(m)}+R}
\end{equation}
where  $C(K)$ is given by~\eref{CK} and
\begin{equation*}\label{R(c,L)}
R=\kappa^{2}\pa{\sigma^{2}+2c{(c+\sigma)(b+2)\over a\kappa}}\Sigma+2{(c+\sigma)^{2}(b+2)^{2}\over a^{2}n^{b}}.
\end{equation*}
\end{prop}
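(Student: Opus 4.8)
The plan is to run the standard penalized least-squares argument, reducing the risk bound to a deviation inequality for a $\chi^2$-type random variable, which is then controlled by Theorem~\ref{norm}. First I would fix $m\in\M$ and, writing $\hat m$ for the selected index, use the definition of the criterion~\eref{crit} and the minimality ${\rm crit}(\hat m)\le{\rm crit}(m)$ together with the Pythagorean decomposition $\ab{Y-\hat f_{m'}}_2^2=\ab{f-\Pi_{S_{m'}}f}_2^2+\ab{\eps}_2^2-\ab{\Pi_{S_{m'}}\eps}_2^2$ (valid since $f=\E(Y)$ and $\eps=Y-f$, and using that $S_{m'}\subset S_{\mathfrak m}$). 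After cancelling the common $\ab{\eps}_2^2$ term and rearranging, one arrives at an inequality of the shape
\[
\ab{f-\hat f_{\hat m}}_2^2\le \ab{f-\Pi_{S_m}f}_2^2+\pen(m)-\pen(\hat m)+\ab{\Pi_{S_{\hat m}}\eps}_2^2-\ab{\Pi_{S_m}\eps}_2^2,
\]
plus a cross term $2\<\Pi_{S_{\hat m}}\eps-\Pi_{S_m}\eps,\,f-\Pi_{S_{\mathfrak m}}f\>$ which vanishes because $\eps$ projections lie in $S_{\mathfrak m}$ and $f-\Pi_{S_{\mathfrak m}}f\perp S_{\mathfrak m}$; care must be taken here that $S_m$ need not contain $\Pi_{S_{\mathfrak m}}f$, so the bias term should be handled as $\ab{f-\Pi_{S_m}f}_2^2$ directly.

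Second, the heart of the matter is to dominate $\ab{\Pi_{S_{\hat m}}\eps}_2^2$ uniformly over $m'\in\M$ by (a constant times) $\pen(m')$ plus a fluctuation. For each fixed $m'$, $\ab{\Pi_{S_{m'}}\eps}_2=\sup_{t\in T_{m'}}\<\eps,t\>$ with $T_{m'}$ a countable dense subset of the Euclidean unit ball of $S_{m'}$; by~\eref{bernstein} Assumption~\ref{momexp} holds with $d(s,t)=\sigma\ab{t-s}_2$ and $\delta(s,t)=\ab{t-s}_\infty$, and Assumption~\ref{lineaire} holds with $D=\dim S_{m'}=|m'|$, $v=\sigma$, and $b=c\sup_{t\in T_{m'}}\ab{t}_\infty$. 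Here condition~\eref{condhisto} enters: for a histogram space built on a partition refining $\mathfrak m$, any unit vector $t\in S_{m'}$ has $\ab{t}_\infty\le(\min_{I\in\mathfrak m}|I|)^{-1/2}\le 1/(a\log n)$, so $b\le c/(a\log n)$. Applying~\eref{svanorm} with $x=\Delta_{m'}+\xi$ and summing $e^{-\Delta_{m'}-\xi}$ over $m'\in\M$ (using $\Sigma<\infty$) gives, on an event of probability at least $1-\Sigma e^{-\xi}$,
\[
\ab{\Pi_{S_{m'}}\eps}_2^2\le \kappa^2\Bigl(\sqrt{\sigma^2(|m'|+\Delta_{m'}+\xi)}+\tfrac{c}{a\log n}(|m'|+\Delta_{m'}+\xi)\Bigr)^2\quad\text{for all }m'\in\M,
\]
and expanding the square with $2\alpha\beta\le\theta\alpha^2+\theta^{-1}\beta^2$ one separates a term absorbed into $\pen(\hat m)$ (this is exactly where the shape of the bound~\eref{p1} with its $\sigma^2+2c(\sigma+c)(b+2)/(a\kappa)$ factor comes from, the $\log n$ in the denominator matching the $\log^2 n$ in~\eref{condhisto}) from a residual term of order $(c+\sigma)^2(b+2)^2/(a^2 n^b)$ that feeds into $R$, plus a term proportional to $\xi$ whose expectation over $\xi$ integrates against $\Sigma e^{-\xi}$ to give the $\Sigma$ contribution in $R$.

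Third, I would combine the two pieces: choosing $\theta$ so that $(1+\theta)\kappa^2\sigma^2\cdot(\text{stuff})\le \pen(\hat m)/K\cdot(\text{fraction})$, the $\ab{\Pi_{S_{\hat m}}\eps}_2^2$ term is majorized by $\pen(\hat m)$ up to a factor $<1$ coming from $K>1$, leaving a positive multiple of $\ab{f-\Pi_{S_m}f}_2^2+\pen(m)$ on the right; then take expectations, and replace $\ab{f-\Pi_{S_m}f}_2^2$ by $\E(\ab{f-\hat f_m}_2^2)=\ab{f-\Pi_{S_m}f}_2^2+\E(\ab{\Pi_{S_m}\eps}_2^2)$ (the extra variance term is nonnegative, so this only helps). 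Taking the infimum over $m\in\M$ yields~\eref{inehisto} with $C(K)$ the explicit constant from tracking the $\theta$ and $K$ bookkeeping (to be recorded as~\eref{CK}). I expect the main obstacle to be the careful choice of the free parameter $\theta$ (equivalently, the splitting of the squared bound from Theorem~\ref{norm}) so that the $|m'|$-linear part is genuinely absorbed by~\eref{p1} while keeping $C(K)$ finite as $K\downarrow1$, together with verifying that the residual and $\Sigma$ terms are exactly of the claimed order — in particular that the $n^{-b}$ decay survives, which hinges on using $b\le c/(a\log n)$ inside $b(|m'|+\cdots)$ via $|m'|\le n$ and $\exp(-b\log n)$-type estimates rather than crude bounds.
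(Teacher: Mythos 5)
There are two genuine gaps in your plan, both at the "heart of the matter" step.

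First, your Pythagorean decomposition is missing a cross term that does \emph{not} vanish. Expanding $\ab{Y-\hat f_{m'}}_2^2=\ab{(I-\Pi_{S_{m'}})f}_2^2+2\<(I-\Pi_{S_{m'}})f,\eps\>+\ab{\eps}_2^2-\ab{\Pi_{S_{m'}}\eps}_2^2$ and taking the difference ${\rm crit}(\hat m)-{\rm crit}(m)\le 0$, the surviving cross term is $2\<\Pi_{S_m}f-\Pi_{S_{\hat m}}f,\eps\>$, not the quantity $2\<\Pi_{S_{\hat m}}\eps-\Pi_{S_m}\eps,\,f-\Pi_{S_{\mathfrak m}}f\>$ you wrote. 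Since $\Pi_{S_m}f-\Pi_{S_{\hat m}}f$ lies in $S_m+S_{\hat m}\subset S_{\mathfrak m}$, orthogonality to $f-\Pi_{S_{\mathfrak m}}f$ is irrelevant and the term generically survives. The paper instead keeps $2\<\eps,\hat f_{\hat m}-\hat f_m\>$ whole, bounds it by $2\ab{\hat f_{\hat m}-\hat f_m}_2\ab{\Pi_{S_m+S_{\hat m}}\eps}_2$ (so the combined space $S_m+S_{\hat m}$ enters), and then uses $2ab\le K^{-1}a^2+Kb^2$ and the triangle inequality; this handles the $f$-piece and the $\eps$-piece simultaneously. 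If you patch your route, you will still be led to controlling $\ab{\Pi_{S_m+S_{m'}}\eps}_2^2$ over pairs, which is why the paper needs $\overline\Lambda_\infty$, and for histograms $S_m+S_{m'}=S_{m\vee m'}$ with $\overline\Lambda_\infty=1$.

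Second, and more seriously, applying Theorem~\ref{norm} directly to the Euclidean unit ball of $S_{m'}$ with $b=c\Lambda_2(S_{m'})\le c/(a\log n)$ produces, after squaring, a term $\kappa^2 b^2(D'+x)^2$ that is \emph{quadratic} in $D'=|m'|+\Delta_{m'}+\xi$. Since $|m'|$ may be as large as $|\mathfrak m|\asymp n/(a^2\log^2 n)$, this quadratic term swamps $\sigma^2(D'+x)$ and cannot be absorbed by a penalty that is linear in $|m|$; no choice of $\theta$ fixes this, because the obstruction is $b\sqrt{D'}\to\infty$ rather than a constant. The paper's Theorem~\ref{chi} circumvents this with a self-improving truncation: it restricts to $T=\{t\in S:\ |t|_2\le 1,\ |t|_\infty\le u/z\}$ (valid on the event $\{|\Pi_S\eps|_2\ge z,\ |\Pi_S\eps|_\infty\le u\}$, because the maximizer $\Pi_S\eps/|\Pi_S\eps|_2$ then lies in $T$), applies Theorem~\ref{norm} with $b=cu/z$ so that $b(D+x)=cu(D+x)/z$, and solves the resulting self-consistency condition $z\ge\kappa(\sqrt{\sigma^2(D+x)}+cu(D+x)/z)$ to get $z^2=\kappa^2(\sigma^2+2cu/\kappa)(D+x)$, genuinely linear in $D$. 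The complementary bad event $\{|\Pi_S\eps|_\infty>u\}$ is controlled separately via~\eref{c2} and Lemma~\ref{lem1}; this is where the $(b+2)$ factor and the $n^{-b}$ residual in $R$ come from, through the choice $z=b\log n$ in Theorem~\ref{selmod} and $u=(c+\sigma)\overline\Lambda_\infty\Lambda_2(\S)\log(n^2e^z)$. In short, the paper proves a general model-selection theorem (Theorem~\ref{selmod}) built on Theorem~\ref{chi}, and Proposition~\ref{histo} is obtained by computing $\Lambda_2(\S)\le 1/(a\log n)$ and $\overline\Lambda_\infty=1$ via Proposition~\ref{control-lambda}; a direct appeal to Theorem~\ref{norm} on the unit ball does not deliver the required linear-in-dimension control.
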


Note that when $c=0$, Inequality~\eref{p1} holds as soon as 
\begin{equation}\label{penideal}
\pen(m)= K\kappa^{2}\sigma^{2}\pa{|m|+\Delta_{m}},\ \ \forall m\in\M.
\end{equation}
Besides, by taking $a=\log^{-1}(n)$ we see that Condition~\eref{condhisto} becomes automatically satisfied and by letting $b$ tend to $+\infty$, Inequality~\eref{inehisto} holds with $\pen$ given by~\eref{penideal} and $R=\kappa^{2}\sigma^{2}\Sigma$.

The problem of selecting among histogram-type estimators in this regression setting has recently been investigated in Sauv\'e~\citeyearpar{MSauve}. Her selection procedure is similar to ours with a different choice of the penalty term. Unlike hers, our penalty does not involve an upper bound $M$ (assumed to be known) on $\ab{f}_{\infty}$. 

\subsection{Families of piecewise polynomials}
In this section, we assume that $f$ is of the form $(F(1/n),\ldots, F(n/n))$ where $F$ is an unknown function on $(0,1]$. Our aim is to estimate $F$ by an estimator which is a piecewise polynomial of degree not larger than $d$ based on a data-driven choice of a partition of $(0,1]$. 

In the sequel, we shall consider partitions $m$ of $\ac{1,\ldots,n}$ such that each element  $I\in m$ consists of at least $d+1$ consecutive integers. For such a partition, $S_{m}$ denotes the linear span of vectors of the form $(P(1/n),\ldots,P(n/n))$ where $P$ varies among the space of piecewise polynomials with degree not larger than $d$ based on the partition of $(0,1]$ given by
\[
\ac{\left({\min I-1\over n}, {\max I\over n}\right],\ I\in m}.
\]
Consider a partition  $\mathfrak{m}$ of $\ac{1,\ldots,n}$ and $\M$ a collection of partitions $m$ such that $S_{m}\subset S_{\mathfrak{m}}$. We obtain the following result.

\begin{prop}\label{pp}
Let $a,b>0$. Assume that
\begin{equation}\label{condppm}
|I|\ge (d+1)a^{2}\log^{2}(n)\ge d+1,\ \ \ \forall I\in \mathfrak{m}.
\end{equation}
If for some $K>1$, 
\[
\pen(m)\ge K\kappa^{2}\pa{\sigma^{2}+c{4\sqrt{2}(\sigma+c)(d+1)(b+2)\over a\kappa}}\pa{D_{m}+\Delta_{m}},\ \ \forall m\in\M.
\] 
the estimator $\hat f_{\hat m}$ satisfies~\eref{inehisto} with 
\[
R=\kappa^{2}\pa{\sigma^{2}+c{4\sqrt{2}(\sigma+c)(d+1)(b+2)\over a\kappa}}\Sigma+4{(c+\sigma)^{2}(b+2)^{2}\over a^{2}n^{b}}.
\]
\end{prop}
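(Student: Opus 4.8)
The plan is to derive Proposition~\ref{pp} from our general model selection theorem (Section~\ref{sect:main}) in exactly the way Proposition~\ref{histo} is derived for histograms: the only model-dependent quantities entering that argument are the dimensions $D_{m}=\dim S_{m}$ and a single sup-norm constant attached to the ambient space $S_{\mathfrak{m}}$. So first I would produce these two ingredients, then use Theorem~\ref{norm} (in the $\chi^{2}$-type form of Section~\ref{sect-chi}) to control the variables $\ab{\Pi_{S}(Y-f)}_{2}$ carried by the relevant subspaces $S\subset S_{\mathfrak{m}}$, and finally feed the resulting deviation bounds into the general oracle inequality.

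\emph{Dimensions and Laplace bound.} Since each $I\in\mathfrak{m}$ contains at least $d+1$ consecutive integers, the restriction map $P\mapsto(P(i/n))_{i\in I}$ is injective on polynomials of degree $\le d$, hence $\dim S_{I}=d+1$ and $D_{m}=(d+1)\ab{m}$ for every $m\in\M$; this is the quantity playing the role of $D_{m}$ in the penalty. Taking $X_{t}=\<Y-f,t\>$, tensorising the Laplace transform over the independent coordinates and bounding $\ab{t_{i}-s_{i}}\le\ab{t-s}_{\infty}$ in the denominator of~\eref{bernstein} shows that Assumption~\ref{momexp} holds with $d(s,t)=\sigma\ab{s-t}_{2}$, $\delta(s,t)=\ab{s-t}_{\infty}$ and the same $c$ as in~\eref{bernstein}. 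Consequently, for any linear subspace $S\subset S_{\mathfrak{m}}$ and any countable dense subset $T$ of its Euclidean unit ball, Assumption~\ref{lineaire} is met with $t_{0}=0$, $v=\sigma$, $D=\dim S$ and $b=\phi:=c\,\sup\ac{\ab{t}_{\infty}\telque t\in S_{\mathfrak{m}},\ \ab{t}_{2}\le1}$, and Theorem~\ref{norm} then provides the desired exponential control of $\ab{\Pi_{S}(Y-f)}_{2}=\sup_{t\in T}\<Y-f,t\>$ (and of its two-sided version) for $S=S_{m}$ and for $S=S_{m}+S_{m'}$.

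\emph{The main obstacle: a sup-norm inequality on $S_{\mathfrak{m}}$.} The point genuinely specific to piecewise polynomials is the estimate $\phi^{2}=c^{2}\,r(\mathfrak{m})$ with $r(\mathfrak{m}):=\sup\ac{\ab{t}_{\infty}^{2}\telque t\in S_{\mathfrak{m}},\ \ab{t}_{2}\le1}\le C(d)\big/\min_{I\in\mathfrak{m}}\ab{I}$ for some constant $C(d)$ depending on $d$ only. I would establish it block by block: $S_{\mathfrak{m}}$ is the Euclidean-orthogonal sum of the spaces $S_{I}=\ac{(P(i/n))_{i\in I}\telque\deg P\le d}$ (the blocks having disjoint supports), so $r(\mathfrak{m})=\max_{I\in\mathfrak{m}}\max_{i\in I}\sum_{k=0}^{d}\psi_{I,k}(i/n)^{2}$ for a Euclidean-orthonormal basis $(\psi_{I,k})_{0\le k\le d}$ of $S_{I}$, the inner sum being the reciprocal of the discrete Christoffel function at $i/n$ for polynomials of degree $\le d$ sampled at the $\ab{I}\ge d+1$ equispaced points of $I$. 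A Markov/Marcinkiewicz--Zygmund-type estimate for such polynomials bounds this quantity by $C(d)/\ab{I}$, and it is here that the precise lower bound~\eref{condppm} on $\ab{I}$ is used; taking the maximum over blocks and invoking~\eref{condppm} once more gives $\phi^{2}\le c^{2}C(d)\big/\big((d+1)a^{2}\log^{2}n\big)$. I expect obtaining this Christoffel-type bound with an honest, $d$-explicit constant to be the bulk of the work.

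\emph{Assembling the oracle inequality.} It then remains to run the scheme of Proposition~\ref{histo}: apply the deviation bounds above to $S_{m}$ and to $S_{m}+S_{m'}\subset S_{\mathfrak{m}}$ at a level $x=\Delta_{m}+(b+2)\ab{m}\log n$, say, so that the corresponding events may be summed over the at most $n^{\ab{m}}$ partitions of $\ac{1,\ldots,n}$ into $\ab{m}$ consecutive blocks while leaving a remainder of order $n^{-b}$ together with a term proportional to $\Sigma$; then insert these into the general oracle inequality of Section~\ref{sect:main}. That inequality yields~\eref{inehisto} as soon as $\pen(m)$ dominates $K\kappa^{2}\big(\sigma^{2}+(\text{cross term in }\phi,\,c\text{ and }\sigma)\big)(D_{m}+\Delta_{m})$; substituting $D_{m}=(d+1)\ab{m}$ and the bound on $\phi$ from the previous step, and tracking the numerical constants, the cross term becomes $c\,4\sqrt2(\sigma+c)(d+1)(b+2)\big/(a\kappa)$, which is exactly the stated penalty, while the leftover additive term is the stated $R$ and $C(K)$ is as in~\eref{CK}. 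None of these final manipulations presents any difficulty beyond careful accounting.
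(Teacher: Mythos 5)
Your high-level plan — identify a sup-norm constant for $S_{\mathfrak{m}}$ and feed it into the general model-selection machinery — is the right idea, but the proposal leaves a genuine gap at the single step you yourself flag as ``the bulk of the work,'' and moreover you are feeding the wrong geometric ingredients into that machinery.

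First, the sup-norm constant. You propose to bound the reciprocal Christoffel function of degree-$\le d$ polynomials at the $\ab{I}$ equispaced points by $C(d)/\ab{I}$ via an unspecified Markov/Marcinkiewicz--Zygmund estimate, deferring both the proof and the value of $C(d)$. The paper does \emph{not} take this route: it invokes Mason~\&~Handscomb to write down an explicit orthonormal basis $\phi_{j,I}$ of the block $S_I$ with the pointwise bound $\ab{\phi_{j,I}}_\infty\le\sqrt{2/\ab{I}}$, i.e.\ $\Phi=\sqrt{2}$, and then applies Proposition~\ref{control-lambda} with $J=\ac{0,\dots,d}$, $P=m$. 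The precise coefficient $4\sqrt{2}$ in the stated penalty is traceable exactly to $\Phi=\sqrt{2}$; with an anonymous constant $C(d)$ you cannot reproduce the claimed penalty, only some unspecified relative of it, so the proposal as written does not prove the proposition as stated.

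Second, and more structurally: you only produce a $\Lambda_2$-type quantity, namely $\phi^2=c^2\sup\ac{\ab{t}_\infty^2 : t\in S_{\mathfrak m},\ \ab{t}_2\le 1}$, and slot it into Assumption~\ref{lineaire} as the parameter $b$. That is not what Theorem~\ref{selmod} needs. Its penalty constant $u$ (see~\eref{defu}) involves the \emph{product} $\overline{\Lambda}_\infty\Lambda_2(\S)$, where $\overline{\Lambda}_\infty=\sup_{m,m'}\Lambda_\infty(S_m+S_{m'})$ controls the sup-norm operator norm of projections, a different object from $\Lambda_2$. In the paper the bound $\overline{\Lambda}_\infty\le 2(d+1)$ comes out of the same application of Proposition~\ref{control-lambda} (together with the identity $S_m+S_{m'}=S_{m\vee m'}$), and it is precisely this that allows one to control the residual event $\ac{\ab{\Pi_{S_{\hat m}+S_m}\eps}_\infty\ge u}$ through the \emph{single fixed} space $\S$ rather than through a union bound over all pairs $(m,m')$, which would be combinatorially hopeless. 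Your proposal never addresses $\Lambda_\infty$ at all. Relatedly, using $b=c\Lambda_2(S)$ directly in Theorem~\ref{norm}, as your plan implies, produces a tail that is quadratic in $D+x$ when $c>0$, which is strictly weaker than Theorem~\ref{chi}'s \eref{c1}; the paper avoids this by first truncating on $\ac{\ab{\Pi_S\eps}_\infty\le u}$ and handling the complement with~\eref{c2}, which is again where $\Lambda_2$ and $\Lambda_\infty$ each play their separate roles. You would need to either reproduce that truncation argument (and thus compute $\overline\Lambda_\infty$) or explain how to bypass it; the proposal does neither.

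In short: correct strategy, but the one nontrivial estimate is not carried out, the constant it would yield is not pinned down and therefore cannot match the stated $4\sqrt{2}$, and the $\Lambda_\infty$ side of the geometry — which Theorem~\ref{selmod} requires in an essential way — is missing.
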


\subsection{Families of trigonometric polynomials}
As in the previous section, we assume here that  $f$ is of the form $(F(x_{1}),\ldots, F(x_{n}))$ where $x_{i}=i/n$ for $i=1,\ldots,n$ and $F$ is an unknown function on $(0,1]$. Our aim is to estimate $F$ by a trigonometric polynomial of degree not larger than some $\overline D\ge 0$. 

Consider the (discrete) trigonometric system $\ac{\phi_{j}}_{j\ge 0}$  of vectors in $\R^{n}$ defined by
\begin{eqnarray*}
\phi_{0}&=&(1/\sqrt{n},\ldots,1/\sqrt{n})\\
\phi_{2j-1}&=&\sqrt{2\over n}\pa{\cos\pa{2\pi jx_{1}},\ldots,\cos\pa{2\pi jx_{1}}},\ \forall j\ge 1\\
\phi_{2j}&=&\sqrt{2\over n}\pa{\sin\pa{2\pi jx_{1}},\ldots,\sin\pa{2\pi jx_{1}}},\ \forall j\ge 1.
\end{eqnarray*}
Let $\M$ be a family of subsets of $\ac{0,\ldots,2\overline D}$. For $m\in\M$,  we define $S_{m}$ as the linear span of the $\phi_{j}$ with $j\in m$ (with the convention $S_{m}=\ac{0}$ when $m=\varnothing$). 

\begin{prop}\label{trigo}
Let $a,b>0$. Assume that $2\overline D+1\le \sqrt{n}/(a\log(n))$.
If for some $K>1$,
\[
\pen(m)\ge K\kappa^{2}\pa{\sigma^{2}+{4c(c+\sigma)(b+2)\over a}}\pa{D_{m}+\Delta_{m}},\ \ \forall m\in\M
\]
then  $\hat f_{\hat m}$ satisfies~\eref{inehisto} with 
\[
R=\kappa^{2}\pa{\sigma^{2}+{4c(c+\sigma)(b+2)\over a}}\Sigma+{4(b+2)^{2}(c+\sigma)^{2}\over a^{2}(2\overline D +1)n^{b}}.
\]

\end{prop}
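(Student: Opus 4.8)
The plan is to derive Proposition~\ref{trigo} from the general model-selection theorem of Section~\ref{sect:main}, whose hypotheses require, for each $m\in\M$, an exponential control of the $\chi^{2}$-type variable $\ab{\Pi_{S_{m}}\eps}_{2}$ together with a matching lower bound on $\pen(m)$, and which then delivers a bound of the form~\eref{inehisto}. The task thus splits into: (i) producing the needed deviation inequalities via Theorem~\ref{norm}, with explicit parameters valid uniformly over $m\in\M$; and (ii) checking that the assumption $2\overline D+1\le\sqrt{n}/(a\log n)$ converts those parameters into the penalty and the remainder $R$ displayed in the statement.

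For (i), fix $m\in\M$, let $B_{m}$ be the Euclidean unit ball of $S_{m}$ and let $T$ be a countable dense subset of $B_{m}$ containing $t_{0}=0$. Since $t\mapsto\<\eps,t\>$ is continuous and $B_{m}$ is the unit ball, $\sup_{t\in T}\<\eps,t\>=\sup_{t\in B_{m}}\<\eps,t\>=\ab{\Pi_{S_{m}}\eps}_{2}$. Taking $X_{t}=\<\eps,t\>$, independence of the $\eps_{i}$ together with~\eref{bernstein} give, for $s,t\in T$,
\[
\log\E\cro{e^{\lambda(X_{t}-X_{s})}}\le{\lambda^{2}\sigma^{2}\ab{t-s}_{2}^{2}\over 2\pa{1-\lambda c\ab{t-s}_{\infty}}},
\]
so Assumption~\ref{momexp} holds with $d(s,t)=\sigma\ab{t-s}_{2}$ and $\delta(s,t)=\ab{t-s}_{\infty}$, both deriving from norms on $S_{m}$. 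For Assumption~\ref{lineaire} one needs $\sup_{t\in B_{m}}\ab{t}_{\infty}$: writing $t=\sum_{j\in m}\theta_{j}\phi_{j}$ with $\sum_{j}\theta_{j}^{2}\le1$, the Cauchy--Schwarz inequality applied coordinatewise together with the identity $\sum_{j=0}^{2\overline D}\phi_{j}(i)^{2}=(2\overline D+1)/n$ (which follows from $\cos^{2}+\sin^{2}=1$) gives $\ab{t}_{\infty}\le\sqrt{(2\overline D+1)/n}$. Hence Assumption~\ref{lineaire} holds with $v=\sigma$ and $b_{\star}:=c\sqrt{(2\overline D+1)/n}$, the \emph{same} values for every $m\in\M$, and Theorem~\ref{norm} yields, for all $x\ge0$,
\[
\P\cro{\ab{\Pi_{S_{m}}\eps}_{2}\ge\kappa\pa{\sqrt{\sigma^{2}(D_{m}+x)}+b_{\star}(D_{m}+x)}}\le e^{-x},\qquad D_{m}=\dim S_{m}.
\]

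For (ii), I would feed this family of inequalities into Section~\ref{sect:main}. The hypothesis on $\overline D$ enters only to tame the $b_{\star}$-terms: since $D_{m}\le2\overline D+1$ one has $b_{\star}\sqrt{D_{m}}\le c(2\overline D+1)/\sqrt{n}\le c/(a\log n)$, and more generally, at the deviation levels $x$ relevant to the union bound over $\M$ (of order $\Delta_{m}+(b+2)\log n$), the quantity $b_{\star}(D_{m}+x)$ is of the same order as $\sqrt{\sigma^{2}(D_{m}+x)}$ up to a factor $b_{\star}\sqrt{2\overline D+1}=c(2\overline D+1)/\sqrt{n}$, which is bounded by $1/a$ times absolute constants. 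Squaring the deviation bound and regrouping with $2\alpha\beta\le\alpha^{2}+\beta^{2}$ then replaces, in the conclusion of the main theorem, the ideal penalty coefficient $\kappa^{2}\sigma^{2}$ by $\kappa^{2}(\sigma^{2}+4c(c+\sigma)(b+2)/a)$, while the contributions of the exceptional events $\ac{\ab{\Pi_{S_{m}}\eps}_{2}\ge\kappa(\cdots)}$, once summed against the prior weights, build up the remainder $R$: the part proportional to $\Sigma$ comes from the weights $\Delta_{m}$, and the $n^{-b}$ part from the $(b+2)\log n$ piece of the threshold combined with the crude deterministic bound for $\ab{\Pi_{S_{m}}\eps}_{2}^{2}$ on those events.

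I expect the only real difficulty to be bookkeeping: carrying the numerical constants through the passage "deviation bound $\longrightarrow$ penalty $+$ remainder" of Section~\ref{sect:main} and verifying that, after invoking $2\overline D+1\le\sqrt{n}/(a\log n)$, the $b_{\star}$-terms collapse exactly into the coefficient $4c(c+\sigma)(b+2)/a$ and into the two pieces of $R$. There is no conceptual obstacle beyond what is already in Theorem~\ref{norm} and Section~\ref{sect:main}; the trigonometric-specific input is entirely the uniform bound $\sup_{t\in B_{m}}\ab{t}_{\infty}\le\sqrt{(2\overline D+1)/n}$ and the way the assumption on $\overline D$ controls it.
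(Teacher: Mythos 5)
Your setup in step (i) is fine — the $L^\infty$ bound $\sup_{t\in B_m}|t|_\infty\le\sqrt{(2\overline D+1)/n}$ on the unit Euclidean ball of $S_m$ is correct, and it is equivalent to what the paper extracts from Proposition~\ref{control-lambda}. But there is a genuine gap between your step (i) and your step (ii), and it is precisely the point at which the paper's proof takes a different route.

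If you apply Theorem~\ref{norm} to the full unit ball with the \emph{fixed} parameter $b_\star=c\sqrt{(2\overline D+1)/n}$, you get a deviation threshold $\kappa\bigl(\sqrt{\sigma^2(D_m+x)}+b_\star(D_m+x)\bigr)$. Squaring produces a term proportional to $b_\star^2(D_m+x)^2$, which is \emph{quadratic} in $D_m+x$. That term cannot be subtracted against a penalty of the form $\mathrm{const}\cdot(D_m+\Delta_m)$: your claim that ``$b_\star(D_m+x)$ is of the same order as $\sqrt{\sigma^2(D_m+x)}$ at the relevant $x$'' is only true for $x$ of order $(2\overline D+1)$ or smaller, but the integration in the model-selection bookkeeping (the quantity $\E[(\,\cdot\,)_+]$ behind the $\Sigma$ term) runs over all $x>0$, and the $\Delta_m$ are unconstrained, so $b_\star\sqrt{D_m+x}$ is not bounded uniformly. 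The hypothesis $2\overline D+1\le\sqrt n/(a\log n)$ makes $b_\star\sqrt{D_m}$ small, but does nothing for the $x$-dependence.

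The paper avoids this by \emph{not} applying Theorem~\ref{norm} to the whole unit ball with a fixed $b$. Instead, Theorem~\ref{chi} first restricts to the event $\{\ab{\Pi_S\eps}_\infty\le u\}$; on that event the maximizer $\hat t=\Pi_S\eps/\chi$ of $\sup_{|t|_2\le1}\<\eps,t\>$ satisfies $\ab{\hat t}_\infty\le u/z$, so one can apply Theorem~\ref{norm} with $b=cu/z$ — a parameter that \emph{shrinks} as the threshold $z$ grows. This gives $z\ge\kappa\bigl(\sqrt{\sigma^2(D+x)}+cuz^{-1}(D+x)\bigr)$, whose solution is $z=\kappa\sqrt{(\sigma^2+2cu\kappa^{-1})(D+x)}$, i.e., $z^2$ is \emph{linear} in $D+x$. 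That is what makes the penalty $\propto(D_m+\Delta_m)$ work and produces the $\Sigma$-part of $R$. The complementary event $\{\ab{\Pi_S\eps}_\infty>u\}$ is then treated separately via the Bernstein bound~\eref{c2}, Lemma~\ref{lem1}, and the quantity $\overline\Lambda_\infty$ — which your proposal never computes — yielding the $n^{-b}$-part of $R$. Your ``exceptional events'' are the wrong ones: they are $\{\ab{\Pi_{S_m}\eps}_2\ge\kappa(\cdots)\}$, not $\{\ab{\Pi_{\S}\eps}_\infty>u\}$, and without the $L^\infty$ split there is no mechanism to linearize the deviation bound.

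Concretely, the paper's proof of Proposition~\ref{trigo} is short precisely because Theorem~\ref{selmod} already packages this $L^\infty$ conditioning. It computes, via Proposition~\ref{control-lambda}, $\Lambda_2^2(\S)\le 2(2\overline D+1)/n$ and $\overline\Lambda_\infty\le\sqrt{2(2\overline D+1)}$, then plugs these and $z=b\log n$ into Theorem~\ref{selmod}; the hypothesis on $\overline D$ converts $u=(c+\sigma)\overline\Lambda_\infty\Lambda_2(\S)\log(n^2e^z)$ into the bound $u\le 2(c+\sigma)(b+2)/a$, which is exactly the constant in the penalty and in $R$. To repair your proof you should either apply Theorem~\ref{selmod} as a black box after computing $\Lambda_2(\S)$ and $\overline\Lambda_\infty$, or, if you insist on working directly from Theorem~\ref{norm}, reproduce the split on $\{\ab{\Pi_S\eps}_\infty\le u\}$ with the threshold-dependent parameter $b=cu/z$.
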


\section{Towards a more general result}\label{sec:unifions}
We consider the statistical framework presented in Section~\ref{sect:stats} and give a general result that allows to handle Propositions~\ref{histo}, ~
\ref{pp} and~\ref{trigo} simultaneously. It will rely on some geometric properties of the linear spaces $S_{m}$ that we describe below. 

\subsection{Some geometric quantities}
Let $S$ be a linear subspace of $\R^{n}$. We associate to $S$ the following quantities 
\begin{equation}\label{defL}
\Lambda_{2}(S)=\max_{i=1,\ldots,n}|\Pi_{S}e_{i}|_{2}\ \ {\rm and}\ \ \Lambda_{\infty}(S)=\max_{i=1,\ldots,n}|\Pi_{S}e_{i}|_{1}.
\end{equation}
It is not difficult to see that these quantities can be interpreted in terms of norm connexions, more precisely
\[
\Lambda_{2}(S)=\sup_{t\in S\setminus\ac{0}}{\ab{t}_{\infty}\over \ab{t}_{2}}\ \ {\rm and}\ \ \Lambda_{\infty}(S)=\sup_{t\in \R^{n}\setminus\ac{0}}{\ab{\Pi_{S}t}_{\infty}\over \ab{t}_{\infty}}.
\]
Clearly, $\Lambda_{2}(S)\le 1$. Besides, since $\ab{x}_{1}\le \sqrt{n}\ab{x}_{2}$ for all $x\in\R^{n}$, $\Lambda_{\infty}(S)\le \sqrt{n}\Lambda_{2}(S)$. Nevertheless, these bounds can be rather rough as shown by the following proposition.

\begin{prop}\label{control-lambda}
Let $P$ be some partition of $\ac{1,\ldots,n}$, $J$ some nonempty index set and 
\[
\ac{\phi_{j,I},\ (j,I)\in J\times P}
\]
an orthonormal system such that for some $\Phi>0$ and all $I\in P$
\[
\sup_{j\in J}\ab{\phi_{j,I}}_{\infty}\le {\Phi\over \sqrt{|I|}}\ \ {\rm and}\ \ \<\phi_{j,I},e_{i}\>=0\  \forall i\not \in I.
\]
If $S$ is the linear span of the $\phi_{j,I}$ with $(j,I)\in J\times P$, 
\[
\Lambda_{2}^{2}(S)\le \pa{{|J|\Phi^{2}\over \min_{I\in P}|I|}}\wedge 1\ \ {\rm and}\ \ \Lambda_{\infty}(S)\le \pa{|J|\Phi^{2}}\wedge \pa{\sqrt{n}\Lambda_{2}(S)}.
\]
\end{prop}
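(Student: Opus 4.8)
The plan is to compute the two quantities $\Lambda_2(S)$ and $\Lambda_\infty(S)$ directly from their definitions in~\eref{defL}, exploiting the block structure of the orthonormal system. Write $e_i = \sum_{(j,I)\in J\times P} \langle \phi_{j,I}, e_i\rangle \phi_{j,I} + w_i$ with $w_i \perp S$, so that $\Pi_S e_i = \sum_{(j,I)} \langle \phi_{j,I}, e_i\rangle \phi_{j,I}$. For a fixed index $i$, let $I(i)$ be the unique element of $P$ containing $i$; by the orthogonality hypothesis $\langle \phi_{j,I}, e_i\rangle = 0$ whenever $I\ne I(i)$, so only the $|J|$ terms indexed by $(j,I(i))$, $j\in J$, survive. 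This is the key structural observation and everything else is bookkeeping.

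\emph{The bound on $\Lambda_2(S)$.} By Parseval on the orthonormal family and the previous reduction,
\[
|\Pi_S e_i|_2^2 = \sum_{j\in J} \langle \phi_{j,I(i)}, e_i\rangle^2 = \sum_{j\in J} \phi_{j,I(i)}(i)^2 \le |J| \sup_{j\in J}\|\phi_{j,I(i)}\|_\infty^2 \le \frac{|J|\Phi^2}{|I(i)|} \le \frac{|J|\Phi^2}{\min_{I\in P}|I|},
\]
where $\phi_{j,I}(i)$ denotes the $i$-th coordinate. Taking the maximum over $i$ gives $\Lambda_2^2(S)\le |J|\Phi^2/\min_{I\in P}|I|$, and combining with the trivial bound $\Lambda_2(S)\le 1$ (already noted in the text) yields the stated minimum.

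\emph{The bound on $\Lambda_\infty(S)$.} Here I estimate $|\Pi_S e_i|_1 = \sum_{k=1}^n |\langle \Pi_S e_i, e_k\rangle|$. Since $\Pi_S$ is self-adjoint, $\langle \Pi_S e_i, e_k\rangle = \langle e_i, \Pi_S e_k\rangle = \sum_{j\in J}\phi_{j,I(i)}(i)\,\langle \phi_{j,I(i)}, e_k\rangle$, which is nonzero only if $k\in I(i)$ as well (same block). Thus the $\ell_1$ sum runs over $k\in I(i)$ only, and by Cauchy--Schwarz applied to the inner double sum, or simply
\[
\sum_{k\in I(i)} \Bigl| \sum_{j\in J}\phi_{j,I(i)}(i)\phi_{j,I(i)}(k)\Bigr| \le \sum_{j\in J} |\phi_{j,I(i)}(i)| \sum_{k\in I(i)} |\phi_{j,I(i)}(k)| \le \sum_{j\in J} \frac{\Phi}{\sqrt{|I(i)|}}\cdot |I(i)|\cdot\frac{\Phi}{\sqrt{|I(i)|}} = |J|\Phi^2,
\]
using $\|\phi_{j,I}\|_\infty \le \Phi/\sqrt{|I|}$ twice and that $\phi_{j,I}$ is supported on $I$ of size $|I|$. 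Taking the maximum over $i$ gives $\Lambda_\infty(S)\le |J|\Phi^2$; the second half of the minimum, $\Lambda_\infty(S)\le \sqrt n\,\Lambda_2(S)$, is the general bound already recorded before the proposition.

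The argument is essentially elementary once the block-diagonal structure is isolated; the only point requiring a little care is making sure, in the $\Lambda_\infty$ estimate, that one does not lose a factor of $|I|$ — the two appearances of $\|\phi_{j,I}\|_\infty\le \Phi/\sqrt{|I|}$ must be balanced against the $|I|$ terms in the inner sum, so the main obstacle (such as it is) is bookkeeping the exponents of $|I|$ correctly rather than any genuine difficulty.
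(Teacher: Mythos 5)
Your proof is correct and follows essentially the same route as the paper's: isolate the block $I(i)$ containing $i$, use Parseval to get the $\Lambda_2$ bound, and bound the $\ell_1$ norm of $\Pi_S e_i$ by a double sum over $k\in I(i)$ and $j\in J$ controlled by $\|\phi_{j,I}\|_\infty \le \Phi/\sqrt{|I|}$. The only cosmetic difference is that you interchange the order of summation before applying the triangle inequality in the $\Lambda_\infty$ estimate, whereas the paper bounds each inner sum uniformly and then multiplies by $|I|$; both yield $|J|\Phi^2$ identically.
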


\begin{proof}[Proof of Proposition~\ref{control-lambda}]
We have already seen that $\Lambda_{2}(S)\le 1$ and $\Lambda_{\infty}(S)\le \sqrt{n}\Lambda_{2}(S)$, so it remains to show that
\[
\Lambda_{2}^{2}(S)\le {|J|\Phi^{2}\over \min_{I\in P}|I|}\ \ {\rm and}\ \ \Lambda_{\infty}(S)\le |J|\Phi^{2}.
\]
Let $i=1,\ldots,n$. There exists some unique $I\in P$ such that $i\in I$ and since $\<\phi_{j,I'},e_{i}\>=0$ for all $I'\neq I$, 
\[
\Pi_{S}e_{i}=\sum_{j\in J}\<e_{i},\phi_{j,I}\>\phi_{j,I}.
\]
Consequently, 
\[
\ab{\Pi_{S}e_{i}}_{2}^{2}=\sum_{j\in J}\<e_{i},\phi_{j,I}\>^{2}\le {|J|\Phi^{2}\over |I|}\le {|J|\Phi^{2}\over \min_{I\in P}|I|}
\]
and 
\begin{eqnarray*}
\ab{\Pi_{S}e_{i}}_{1}&=&\sum_{i'\in I}\ab{\sum_{j\in J}\<e_{i},\phi_{j,I}\>\<e_{i'},\phi_{j,I}\>}\le |I|{|J|\Phi^{2}\over |I|}\le |J|\Phi^{2}.
\end{eqnarray*}
We conclude since  $i$ is arbitrary. 
\end{proof}

\subsection{The main result}\label{sect:main}
Let $\ac{S_{m},\ m\in\M}$ be family of linear spaces and $\ac{\Delta_{m},\ m\in\M}$ a family of nonnegative weights. We define $\S=\sum_{m\in\M}S_{m}$ and
\[
\overline{\Lambda}_{\infty}=\pa{\sup_{m,m'\in\M}\Lambda_{\infty}(S_{m}+S_{m'})}\vee 1.
\]

\begin{thm}\label{selmod}
Let $K>1$ and $z\ge 0$. Assume that for all $i=1,\ldots,n$, Inequality~\eref{bernstein} holds.  Let $\pen$ be some penalty function satisfying 
\begin{equation}\label{pen}
\pen(m)\ge K\kappa^{2}\pa{\sigma^{2}+{2cu\over \kappa}}\pa{D_{m}+\Delta_{m}},\ \ \forall m\in\M
\end{equation}
where
\begin{equation}\label{defu}
u=(c+\sigma)\overline{\Lambda}_{\infty}\Lambda_{2}(\S)\log(n^{2}e^{z}).
\end{equation}
If one selects $\hat m$ among $\M$ as any minimizer of ${\rm crit}(.)$ defined by~\eref{crit} then 
\[
\E\cro{\ab{f-\hat f_{\hat m}}_{2}^{2}}\le C(K)\cro{\inf_{m\in\M}\pa{\E\cro{\ab{f-\hat f_{m}}_{2}^{2}}+\pen(m)}+R}
\]
where 
\begin{eqnarray}
C(K)&=& {K(K^{2}+K-1)\over (K-1)^{3}}\label{CK}
\end{eqnarray}
and 
\begin{eqnarray}
R&=&\kappa^{2}\pa{\sigma^{2}+{2cu\over \kappa}}\Sigma+2\pa{u\over \overline{\Lambda}_{\infty}}^{2}e^{-z}.\nonumber
\end{eqnarray}
\end{thm}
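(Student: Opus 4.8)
The strategy is the classical model-selection argument à la Birgé--Massart, with the key probabilistic input being Theorem~\ref{norm} applied to the chi-square--type variables $\ab{\Pi_{S_m+S_{m'}}\eps}_2$. First I would fix a reference model $m\in\M$ and start from the definition of $\hat m$: since $\mathrm{crit}(\hat m)\le\mathrm{crit}(m)$, expanding $\ab{Y-\hat f_{m'}}_2^2=\ab{f-\hat f_{m'}}_2^2+2\<\eps,f-\hat f_{m'}\>+\ab{\eps}_2^2$ and using $\hat f_{m'}=\Pi_{S_{m'}}f+\Pi_{S_{m'}}\eps$, one arrives at an inequality of the form
\[
\ab{f-\hat f_{\hat m}}_2^2\le \ab{f-\hat f_m}_2^2+\pen(m)-\pen(\hat m)+2\<\eps,\hat f_{\hat m}-f\>-2\<\eps,\hat f_m-f\>,
\]
and the cross term $2\<\eps,\hat f_{\hat m}-f\>$ must be split into a deterministic part (controlled by $\Pi_{S_{\hat m}}f$) and the genuinely random part $2\<\eps,\Pi_{S_{\hat m}}\eps\>=2\ab{\Pi_{S_{\hat m}}\eps}_2^2$, which is exactly a $\chi^2$-type quantity associated with $S_m+S_{\hat m}$.

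Second, I would control $\ab{\Pi_{S_m+S_{m'}}\eps}_2$ uniformly over $m'\in\M$. For each pair, write $\ab{\Pi_{S_m+S_{m'}}\eps}_2=\sup_{t\in T_{m,m'}}\<\eps,t\>$ with $T_{m,m'}$ a countable dense subset of the Euclidean unit ball of $S_m+S_{m'}$; Assumption~\eref{bernstein} gives Assumption~\ref{momexp} with $d(s,t)=\sigma\ab{t-s}_2$ and $\delta(s,t)=\ab{t-s}_\infty$, and Assumption~\ref{lineaire} holds with $v=\sigma$, $D=D_m+D_{m'}\le 2\,\mathrm{rank}(S_m+S_{m'})$ and $b=c\,\Lambda_\infty(S_m+S_{m'})\Lambda_2(S_m+S_{m'})$ up to the norm-connection bounds $\ab{t}_\infty\le\Lambda_2(S)\ab{t}_2$ on $S=S_m+S_{m'}$. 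Then Theorem~\ref{norm} yields, for every $m'$, an event on which $\ab{\Pi_{S_m+S_{m'}}\eps}_2^2\le \kappa^2(\sigma^2+2cu/\kappa)(D_m+D_{m'}+\Delta_{m'}+x_{m'})$ with probability at least $1-e^{-x_{m'}}$; choosing $x_{m'}=\Delta_{m'}+z'$ and summing the exceptional probabilities against $\Sigma=\sum e^{-\Delta_{m'}}$ controls the union over all $m'\in\M$. The appearance of $\log(n^2e^z)$ in $u$ comes precisely from absorbing the $\sqrt{D}\cdot\sqrt{x}$ cross-terms and from the $\overline\Lambda_\infty$-dependence needed so that the $b(D+x)$ term is dominated by the quadratic $\sigma^2(D+x)$ contribution after squaring.

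Third, on the good event I would plug these bounds back into the basic inequality: the random quadratic term $2\ab{\Pi_{S_{\hat m}}\eps}_2^2$ is now bounded by a deterministic multiple of $D_{\hat m}+\Delta_{\hat m}$ plus a remainder; one also bounds the remaining linear cross terms $2\<\eps,\Pi_{S_{\hat m}}f-f\>$ and $2\<\eps,\hat f_m-f\>$ using $2ab\le \theta a^2+\theta^{-1}b^2$ to transfer part onto $\ab{f-\hat f_{\hat m}}_2^2$ and onto the penalty. Since by~\eref{pen} the penalty $\pen(m')$ dominates $\kappa^2(\sigma^2+2cu/\kappa)(D_{m'}+\Delta_{m'})$ with a factor $K>1$ to spare, the term $-\pen(\hat m)$ absorbs the $\hat m$-dependent stochastic contributions, leaving only $m$-dependent quantities; rearranging and optimizing the free constant $\theta$ over the interval dictated by $K$ produces the constant $C(K)=K(K^2+K-1)/(K-1)^3$. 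Finally one integrates the resulting inequality over the good event and adds the contribution of the complement, where $\ab{f-\hat f_{\hat m}}_2^2$ is crudely bounded; this contributes the residual term $2(u/\overline\Lambda_\infty)^2e^{-z}$ in $R$, and taking the infimum over $m\in\M$ gives the stated bound.

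The main obstacle is the second step: getting the \emph{right} quantity $u$ — in particular the factor $\overline\Lambda_\infty\Lambda_2(\S)$ rather than a cruder $\sqrt n$ — requires carefully tracking how the $\ab{\cdot}_\infty$-radius of $T_{m,m'}$ in Assumption~\ref{lineaire} scales, and ensuring the $b(D+x)$ term in~\eref{svanorm} is, after squaring and summing over $\M$, genuinely of lower order than the main $\sigma^2(D_{m'}+\Delta_{m'})$ term so that it can be folded into the penalty; this is where the logarithmic factor and the precise form of~\eref{defu} are forced.
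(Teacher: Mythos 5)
Your high-level skeleton matches the paper: start from $\mathrm{crit}(\hat m)\le\mathrm{crit}(m)$, isolate the cross term, split it with $2ab\le K^{-1}a^2+Kb^2$, absorb the $\hat m$-dependent part into $\pen(\hat m)$ using the margin $K>1$, and union-bound over $\M$ against $\Sigma$. The gap is in the probabilistic core, and it is exactly the obstacle you flag at the end but do not resolve.

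You take $T_{m,m'}$ to be (a dense countable subset of) the Euclidean unit ball of $S=S_m+S_{m'}$ and claim Assumption~\ref{lineaire} holds with $b=c\,\Lambda_\infty(S)\Lambda_2(S)$. The direct norm-connection bound only gives $\ab{t}_\infty\le\Lambda_2(S)\ab{t}_2$, hence $b=c\,\Lambda_2(S)$; the extra $\Lambda_\infty$ factor does not come out of this step. More importantly, with $b=c\Lambda_2(S)$ (or any fixed $b$ of that order), applying Theorem~\ref{norm} over the whole unit ball yields a bound $Z\le\kappa\bigl(\sqrt{\sigma^2(D+x)}+b(D+x)\bigr)$, and after squaring the $b^2(D+x)^2$ term is \emph{quadratic} in $D+x$; it cannot be dominated by a penalty that is linear in $D_m+\Delta_m$ except under very restrictive dimension bounds, so the argument as written breaks down. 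This is precisely why the paper does \emph{not} feed the unit ball into Theorem~\ref{norm}. Instead, Theorem~\ref{chi} first truncates on the random event $\{\ab{\Pi_S\eps}_\infty\le u\}$: on that event the maximizer $\hat t=\Pi_S\eps/\chi$ satisfies $\ab{\hat t}_\infty\le u/z$, so one may take $T=\{t\in S:\ab{t}_2\le1,\ \ab{t}_\infty\le u/z\}$ with $b=cu/z$, and the choice $z=\kappa\sqrt{(\sigma^2+2cu/\kappa)(D+x)}$ is checked to be self-consistent (i.e.\ $z\ge\kappa\bigl(\sqrt{\sigma^2(D+x)}+cuz^{-1}(D+x)\bigr)$), which makes the $b$-term of the \emph{same} order as the $\sqrt{\ }$-term rather than dominant. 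The complementary event $\{\ab{\Pi_S\eps}_\infty>u\}$ is handled separately: $\Lambda_2(\S)$ enters through the coordinatewise Bernstein bound on $\ab{\Pi_\S\eps}_\infty$, $\overline\Lambda_\infty$ through the inequality $\ab{\Pi_{S_m+S_{\hat m}}\eps}_\infty\le\overline\Lambda_\infty\ab{\Pi_\S\eps}_\infty$, and the contribution $2(u/\overline\Lambda_\infty)^2e^{-z}$ comes from integrating that tail via Lemma~\ref{lem1}, not from a crude bound on $\ab{f-\hat f_{\hat m}}_2^2$ over a bad event. Without this truncation-plus-tail-integration device, your step two does not produce a bound of the required linear form, and the specific structure of $u$ in~\eref{defu} (with the $\log(n^2e^z)$ factor set so that $\phi(x_0)\ge 1$ in Lemma~\ref{lem1}) is not forced by anything in your sketch.
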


When $c=0$ we derive the following corollary by letting $z$ grow towards infinity. 

\begin{cor}
Let $K>1$. Assume that the $\eps_{i}$ for $i=1,\ldots,n$ satisfy Inequality~\eref{bernstein} with $c=0$. If one selects $\hat m$ among $\M$ as a minimizer of ${\rm crit}$ defined by~\eref{crit}
with $\pen$ satisfying
\[
\pen(m)\ge K\kappa^{2}\sigma^{2}\pa{D_{m}+\Delta_{m}},\ \ \forall m\in\M
\]
then 
\[
\E\cro{\ab{f-\hat f_{\hat m}}_{2}^{2}}\le {K(K^{2}+K-1)\over (K-1)^{3}}\inf_{m\in\M}\pa{\E\cro{\ab{f-\hat f_{m}}_{2}^{2}}+\pen(m)}+R
\]
where 
\[
R={K^{3}\kappa^{2}\sigma^{2}\over (K-1)^{2}}\Sigma.
\]
\end{cor}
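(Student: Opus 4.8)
The plan is to obtain the Corollary as the limiting case $c=0$, $z\to+\infty$ of Theorem~\ref{selmod}. The first thing I would check is that when $c=0$ the penalty requirement \eref{pen} no longer involves $z$: the term $2cu/\kappa$ vanishes and \eref{pen} reads $\pen(m)\ge K\kappa^{2}\sigma^{2}(D_{m}+\Delta_{m})$ for all $m\in\M$, which is exactly the assumption of the Corollary. Hence, under the hypotheses of the Corollary, Theorem~\ref{selmod} applies for \emph{every} $z\ge 0$ and gives
\[
\E\cro{\ab{f-\hat f_{\hat m}}_{2}^{2}}\le C(K)\cro{\inf_{m\in\M}\pa{\E\cro{\ab{f-\hat f_{m}}_{2}^{2}}+\pen(m)}+R(z)},
\]
where, since $c=0$ forces $u/\overline{\Lambda}_{\infty}=\sigma\Lambda_{2}(\S)\log(n^{2}e^{z})$, the remainder is $R(z)=\kappa^{2}\sigma^{2}\Sigma+2\sigma^{2}\Lambda_{2}^{2}(\S)\,\log^{2}(n^{2}e^{z})\,e^{-z}$. (Note this step genuinely needs $c=0$: if $c>0$ one could not let $z$ grow, since $u$ and hence the admissible penalty would blow up.)

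Next I would let $z\to+\infty$. The only $z$-dependent quantity on the right-hand side is $R(z)$: the estimator $\hat f_{\hat m}$, the function $f$, the infimum over $m$ and the penalty are all fixed. Since $\log^{2}(n^{2}e^{z})\,e^{-z}=(2\log n+z)^{2}e^{-z}\longrightarrow 0$, we get $R(z)\to\kappa^{2}\sigma^{2}\Sigma$, hence
\[
\E\cro{\ab{f-\hat f_{\hat m}}_{2}^{2}}\le C(K)\inf_{m\in\M}\pa{\E\cro{\ab{f-\hat f_{m}}_{2}^{2}}+\pen(m)}+C(K)\kappa^{2}\sigma^{2}\Sigma,
\]
which already has the announced shape with leading constant $C(K)=K(K^{2}+K-1)/(K-1)^{3}$ in front of the bias term.

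The only point I expect to be delicate — everything above being an immediate limit — is identifying the remainder with the precise announced value $R=K^{3}\kappa^{2}\sigma^{2}\Sigma/(K-1)^{2}$ rather than the coarser $C(K)\kappa^{2}\sigma^{2}\Sigma$, since this requires looking inside the proof of Theorem~\ref{selmod} rather than quoting it as a black box. When $c=0$ the $\chi^{2}$-type control used there, namely Theorem~\ref{norm} with $b=0$, collapses to the clean $z$-free bound $\P\cro{\ab{\Pi_{S_{m}}\xi}_{2}^{2}\ge\kappa^{2}\sigma^{2}(D_{m}+x)}\le e^{-x}$; the factor $n^{2}e^{z}$ entered the general statement only through the handling of the $b$-term in that deviation, which is now absent. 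Feeding this clean deviation into the standard penalised least-squares algebra (as in Birg\'e--Massart) separates the constants: the bias term is multiplied by $C(K)$ and the $\sum_{m}e^{-\Delta_{m}}=\Sigma$ term by $K^{3}/(K-1)^{2}$, which produces the Corollary in exactly the stated form. So the bulk of the work sits in Theorem~\ref{selmod}, and the genuinely new content is this last piece of constant bookkeeping in the sub-Gaussian case.
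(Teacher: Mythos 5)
Your proposal is correct and takes the same route as the paper, which dispatches this corollary in a single remark (``letting $z$ grow towards infinity''). You go slightly further than the paper's terse proof by flagging the right subtlety: quoting Theorem~\ref{selmod} verbatim with $c=0$ and $z\to\infty$ yields a remainder $C(K)\kappa^{2}\sigma^{2}\Sigma$ rather than the announced $\tfrac{K^{3}}{(K-1)^{2}}\kappa^{2}\sigma^{2}\Sigma$, and since $C(K)$ and $\tfrac{K^{3}}{(K-1)^{2}}$ are not comparable uniformly in $K>1$, one must indeed reopen the proof of Theorem~\ref{selmod} rather than use it as a black box. There one multiplies
\[
\frac{(K-1)^{2}}{K^{2}}\,\E\!\cro{\ab{f-\hat f_{\hat m}}_{2}^{2}}\le \frac{K^{2}+K-1}{K(K-1)}\,\E\!\cro{\ab{f-\hat f_{m}}_{2}^{2}}+\pen(m)+E_{1}+E_{2}
\]
through by $K^{2}/(K-1)^{2}$, which puts $C(K)$ on the bias term, $K^{2}/(K-1)^{2}\le C(K)$ on $\pen(m)$, and $K^{3}/(K-1)^{2}$ on $E_{1}+E_{2}$; when $c=0$ and $z\to\infty$, $E_{1}\le K\kappa^{2}\sigma^{2}\Sigma$ and $E_{2}\to 0$, giving exactly the stated $R$. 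So your ``constant bookkeeping'' step is the whole content, and it works out as you claim.
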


\section{Proofs}\label{Proof}
We start with the following result generalizing
Theorem~\ref{norm} when $d$ and $\delta$ are not induced by norms. We assume that $T$ is finite and take numbers $v$ and $b$ such that 
\begin{equation}\label{contraintes}
\sup_{s\in T}d(s,t_{0})\le v,\ \ \ \sup_{s\in T}c\delta(s,t_{0})\le b.
\end{equation}
We consider now a family of finite partitions $\pa{\A_{k}}_{k\ge 0}$ of $T$, such that $\A_{0}=\ac{T}$ and for $k\ge 1$ and $A\in \A_{k}$
\[
d(s,t)\le 2^{-k}v\ \ {\rm and}\ \  c\delta(s,t)\le 2^{-k} b,\ \ \forall s,t\in A.
\]
Besides, we assume $\A_{k}\subset \A_{k-1}$ for all $k\ge 1$, which means that all elements $A\in \A_{k}$ are subsets of an element of $\A_{k-1}$. Finally, we define for $k\ge 0$
\[
N_{k}=|\A_{k+1}||\A_{k}|.
\]

\begin{thm}\label{chi2}
Let $T$ be some finite set. Under Assumption~\ref{momexp}, 
\begin{equation}\label{sva}
\P\pa{Z\ge H+2\sqrt{2v^{2}x}+ 2bx}\le e^{-x},\ \ \forall x>0
\end{equation}
where
\[
H=\sum_{k\ge 0}2^{-k}\pa{v\sqrt{2\log(2^{k+1}N_{k})}+b\log(2^{k+1}N_{k})}.
\]
Moreover, 
\begin{equation}\label{va}
\P\pa{\overline{Z}\ge H+2\sqrt{2v^{2}x}+ 2bx}\le 2e^{-x},\ \ \forall x>0.
\end{equation}
\end{thm}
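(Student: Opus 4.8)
The plan is to run Talagrand's generic chaining along the prescribed refining sequence of partitions $\pa{\A_k}_{k\ge0}$. For $k\ge0$ and $t\in T$, let $A_k(t)$ denote the cell of $\A_k$ containing $t$, and fix once and for all a representative $\pi_k(t)\in A_k(t)$ depending on $t$ only through $A_k(t)$, normalised so that $\pi_0\equiv t_0$ (legitimate since $\A_0=\ac{T}$). Because $T$ is finite and the $\A_k$ are nested, $\ab{\A_k}$ is nondecreasing and bounded by $\ab{T}$, hence $\A_k$ equals some fixed partition $\A_K$ for all $k\ge K$; on a cell of $\A_K$ one has $d(s,t)\le2^{-k}v$ and $c\delta(s,t)\le2^{-k}b$ for every $k\ge K$, so $d(s,t)=c\delta(s,t)=0$, and \eref{debase} (used with $X_t-X_s$ and with $X_s-X_t$) forces $X_s=X_t$ a.s. there. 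Choosing the representatives consistently across the stable levels, for every $t$ the telescoping identity
\[
X_t-X_{t_0}=\sum_{k\ge0}\pa{X_{\pi_{k+1}(t)}-X_{\pi_k(t)}}
\]
holds a.s., the series being a.s. finite; consequently $Z\le\sum_{k\ge0}\max_{(w,u)\in P_k}\pa{X_u-X_w}$ a.s., where $P_k=\ac{\pa{\pi_k(t),\pi_{k+1}(t)}:t\in T}$.

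Second, I would control each increment by Bernstein's inequality. If $(w,u)\in P_k$ with $k\ge1$, then $w$ and $u$ lie in the same cell of $\A_k$ (by nestedness), so $d(w,u)\le2^{-k}v$ and $c\delta(w,u)\le2^{-k}b$; for $k=0$ the pair is $\pa{t_0,\pi_1(t)}$ and the same two bounds hold by \eref{contraintes}. In either case \eref{debase} says that $X_u-X_w$ obeys a bound of the form \eref{exp} with $v^2$ replaced by $d^2(w,u)\le2^{-2k}v^2$ and $c$ replaced by $c\delta(w,u)\le2^{-k}b$, whence, by the standard Chernoff argument giving \eref{bern},
\[
\P\pa{X_u-X_w\ge2^{-k}v\sqrt{2y}+2^{-k}by}\le e^{-y},\qquad\forall y\ge0.
\]
Counting representatives, $\ab{P_k}\le\ab{\A_k}\,\ab{\A_{k+1}}=N_k$.

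Third comes the union bound, with the level-dependent budget $y_k=\log\pa{2^{k+1}N_k}+x$, which is positive since $N_k\ge1$. The probability that some increment at some level exceeds its threshold is at most $\sum_{k\ge0}N_ke^{-y_k}=e^{-x}\sum_{k\ge0}2^{-(k+1)}=e^{-x}$. On the complementary event, the bound from the first paragraph gives $Z\le\sum_{k\ge0}2^{-k}\pa{v\sqrt{2y_k}+by_k}$; using $\sqrt{2y_k}\le\sqrt{2\log(2^{k+1}N_k)}+\sqrt{2x}$ together with $\sum_{k\ge0}2^{-k}=2$, the right-hand side is at most $H+2\sqrt{2v^2x}+2bx$, which proves \eref{sva}. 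Finally, \eref{va} follows by applying \eref{sva} both to $\pa{X_t}$ and to $\pa{-X_t}$: the latter satisfies Assumption~\ref{momexp} and \eref{contraintes} with the same $d,\delta,c,v,b$, hence with the same $N_k$ and the same $H$, and since $\overline Z=\max\ac{Z,\ \sup_{t\in T}\pa{X_{t_0}-X_t}}$ a union bound over the two exceptional events costs the factor $2$.

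The step I expect to need the most care is the first one: checking that the chain "closes up", i.e. that the telescoping series is genuinely a.s. finite and sums to $X_t-X_{t_0}$. This is where finiteness of $T$ is essential — a refining sequence of finite partitions must stabilise, after which the cell diameters are forced to vanish in both $d$ and $\delta$, and \eref{debase} then pins $X$ down to be a.s. constant on each limiting cell. A minor but genuine point along the way is that level $k=0$ is not covered by any cell-diameter bound and must instead be handled through the global radius constraint \eref{contraintes}; everything else (the passage from \eref{exp} to the Bernstein tail, the counting of pairs, and the extraction of $H$ via $\sqrt{a+b}\le\sqrt a+\sqrt b$) is routine.
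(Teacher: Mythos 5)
Your proposal is correct and follows essentially the same generic-chaining route as the paper's own proof: telescope $X_t-X_{t_0}$ along representatives $\pi_k(t)$ of the nested partitions, apply the Bernstein tail bound inherited from Assumption~\ref{momexp} with scale parameters $2^{-k}v,\ 2^{-k}b$ to each increment, and union-bound over the at most $N_k$ pairs at level $k$ with budget $\log(2^{k+1}N_k)+x$. You are merely a bit more explicit than the paper in verifying that the telescoping series genuinely closes up on the finite set $T$ (the paper simply calls the decomposition ``(finite)'') and in noting that the $k=0$ increments are controlled through~\eref{contraintes} rather than through a cell-diameter bound, which is only imposed for $k\ge1$; both points are correct and in the spirit of the original argument.
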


The quantity $H$ can be related to the entropies of $T$ with respect to the distances $d$ and $c\delta$ (when $c\ne 0$) in the following way. We first recall that for a distance $e(.,.)$ on $T$ and $\varepsilon>0$, the entropy $H(T,e,\varepsilon)$ is defined as logarithm of the minimum number of balls of radius $\varepsilon$ with respect to $e$ which are necessary to cover $T$. Note that for $k\ge 0$, each element $A$ of the partition $\A_{k+1}$ is a subset of both a ball of radius $2^{-(k+1)}v$ with respect to $d$ and of a ball of radius $2^{-(k+1)}b$ with respect $c\delta$. Besides, since $|\A_{k+1}|\le N_{k}$, we obtain that for all $\varepsilon\in[2^{-(k+1)},2^{-k})$
\[
H(T,\varepsilon)=\max\ac{H(T,\varepsilon v),H(T,c\delta,\varepsilon b)}\le \log(N_{k}).
\]
By integrating with respect to $\varepsilon$ (and using~\eref{contraintes}), we deduce that
\[
\int_{0}^{+\infty}\pa{\sqrt{2v^{2}H(T,\varepsilon)}+bH(T, \varepsilon)}d\varepsilon\le H.
\]

\subsection{Proof of Theorem~\ref{chi2}}
Note that we obtain~\eref{va} by using~\eref{sva} twice (once with $X_{t}$ and then with $-X_{t}$). Let us now prove~\eref{sva}. For each $k\ge 1$ and $A\in\A_{k}$, we choose some arbitrary element $t_{k}(A)$ in $A$. For each $t\in T$ and $k\ge 1$, there exists a unique $A\in \A_{k}$ such that $t\in A$ and we set $\pi_{k}(t)=t_{k}(A)$. When $k=0$, we set $\pi_{0}(t)=t_{0}$.

We consider the (finite) decomposition
\[
X_{t}-X_{t_{0}}=\sum_{k\ge 0}X_{\pi_{k+1}(t)}-X_{\pi_{k}(t)}
\]
and set for $k\ge 0$
\[
z_{k}=2^{-k}\pa{v\sqrt{2\pa{\log(2^{k+1}N_{k})+x}}\ +\ b\pa{\log(2^{k+1}N_{k})+x}}
\]
Since $\sum_{k\ge 0}z_{k}\le z=H+2v\sqrt{2x}+ 2bx$,
\begin{eqnarray*}
\P\pa{Z \ge z} &\le& \P\pa{\exists t,\ \exists k\ge 0,\ \ X_{\pi_{k+1}(t)}-X_{\pi_{k}(t)}\ge  z_{k}}\\
&\le& \sum_{k\ge 0}\sum_{(s,u)\in E_{k}}\P\pa{X_{u}-X_{s}\ge z_{k}}
\end{eqnarray*}
where 
\[
E_{k}=\ac{\pa{\pi_{k}(t),\pi_{k+1}(t)}|\ t\in T}.
\]
Since $\A_{k+1}\subset \A_{k}$, $\pi_{k}(t)$ and $\pi_{k+1}(t)$ belong to a same element of $\A_{k}$ and therefore $d(s,u)\le 2^{-k}v$ and $c\delta(s,u)\le 2^{-k}b$ for all pairs $(s,u)\in E_{k}$. 
Besides, under Assumption~\ref{momexp}, the random variable $X=X_{u}-X_{s}$ with $(s,u)\in E_{k}$ is centered and satisfies~\eref{exp} with $2^{-k}v$ and $2^{-k}b$ in place of $v$ and $c$. Hence, by using Berstein's Inequality~\eref{bern}, we get for all $(s,u)\in E_{k}$ and $k\ge 0$
\[
\P\pa{X_{u}-X_{s}\ge z_{k}}\le 2^{-(k+1)}N_{k}^{-1}e^{-x}\le 2^{-(k+1)}|E_{k}|^{-1}e^{-x}.
\]
Finally, we obtain Inequality~\eref{sva} summing up this inequalities over $(s,u)\in E_{k}$ and $k\ge 0$. 

\subsection{Proof of Theorem~\ref{norm}}
We only prove~\eref{svanorm}, the argument for proving~\eref{vanorm} being the same as that for proving~\eref{va}. For $t\in S$ and $r>0$, we denote by $B_{2}(t,r)$ and $B_{\infty}(t,r)$ the balls centered at $t$ of radius $r$ associated to $\|\ \|_{2}$ and $\|\ \|_{\infty}$ respectively. In the sequel, we shall use the following result on the entropy of those balls. 

\begin{prop}\label{entropie}
Let $\|\ \|$ be an arbitrary norm on $S$ and $B(0,1)$ the corresponding unit ball. For each $\delta\in (0,1]$, the minimal number $\NN(\delta)$ of balls of radius $\delta$ (with respect to $\|\ \|$) which are necessary to cover $B(0,1)$ satisfies 
\[
\NN(\delta) \le \pa{1+2\delta^{-1}}^{D}.
\]
\end{prop}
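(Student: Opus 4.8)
\textbf{Proof proposal for Proposition~\ref{entropie}.} The plan is to use the classical volumetric argument based on a maximal separated set. Fix $\delta\in(0,1]$ and identify $S$ with $\R^{D}$ via a linear isomorphism; let $\mu$ denote the Lebesgue measure transported to $S$ through this isomorphism. The unit ball $B(0,1)$ is a bounded convex body with nonempty interior, hence $0<\mu(B(0,1))<+\infty$, and by translation invariance and homogeneity of $\mu$ one has $\mu(B(x,r))=r^{D}\mu(B(0,1))$ for every $x\in S$ and $r>0$.

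First I would pick a subset $\ac{x_{1},\ldots,x_{N}}$ of $B(0,1)$ that is $\delta$-separated, i.e. $\|x_{i}-x_{j}\|>\delta$ for $i\ne j$, and maximal for inclusion among such subsets (such a set is finite, as will follow from the volume bound below, or because $B(0,1)$ is compact). By maximality, for every $t\in B(0,1)$ there is some $i$ with $\|t-x_{i}\|\le\delta$; in other words the balls $B(x_{i},\delta)$ cover $B(0,1)$, so $\NN(\delta)\le N$ and it suffices to bound $N$.

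Next I would observe that the balls $B(x_{i},\delta/2)$, $i=1,\ldots,N$, are pairwise disjoint: if $y\in B(x_{i},\delta/2)\cap B(x_{j},\delta/2)$ then $\|x_{i}-x_{j}\|\le\|x_{i}-y\|+\|y-x_{j}\|\le\delta$, contradicting the separation. Moreover each $B(x_{i},\delta/2)$ is contained in $B(0,1+\delta/2)$ because $\|x_{i}\|\le 1$. Taking $\mu$ and using disjointness and the scaling identity gives
\[
N\pa{\frac{\delta}{2}}^{D}\mu(B(0,1))=\sum_{i=1}^{N}\mu\pa{B(x_{i},\delta/2)}\le \mu\pa{B(0,1+\delta/2)}=\pa{1+\frac{\delta}{2}}^{D}\mu(B(0,1)).
\]
Dividing by $\mu(B(0,1))>0$ and by $(\delta/2)^{D}$ yields
\[
N\le\pa{\frac{1+\delta/2}{\delta/2}}^{D}=\pa{1+\frac{2}{\delta}}^{D}=\pa{1+2\delta^{-1}}^{D},
\]
which is the claimed bound. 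I do not expect any real obstacle here; the only points requiring a word of care are the finiteness/existence of the maximal $\delta$-separated set and the fact that a norm ball is Lebesgue-measurable with positive finite measure, both of which are standard.
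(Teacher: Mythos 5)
Your proof is correct and takes essentially the same route as the paper: both use a maximal $\delta$-separated subset of $B(0,1)$, observe that the $\delta/2$-balls around its points are pairwise disjoint and contained in $B(0,1+\delta/2)$, and conclude by the Lebesgue volume scaling $\mu(B(x,r))=r^{D}\mu(B(0,1))$.
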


This lemma can be found in Birg\'e~\citeyearpar{MR722129} (Lemma 4.5, p. 209) with a proof referring to Lorentz~\citeyearpar{MR0203320}. Nevertheless, we provide a proof below to keep this paper as self-contained as possible. 

\begin{proof}
With no loss of generality, we may assume that $S=\R^{D}$. Let $\delta\in (0,1]$. A subset $\T$ of $B(0,1)$ is called $\delta$-separated if for all $s,t\in\T$, $\|s-t\|>\delta$. If $\T$ is $\delta$-separated, the family of (open) balls centered at those $t\in\T$ with radius $\delta/2$ are all disjoint and included in the ball $B(0,1+\delta/2)$. By a volume argument (with respect to the Lebesgue measure on $\R^{D}$), we deduce that $\T$ is finite and satisfies  $|\T|\le (1+2\delta^{-1})^{D}$. Consider now a maximal $\delta$-separated set $\T$, that is  
\[
|\T|=\max_{\T'}|\T'|
\]
where $\T'$ runs among the family of all the $\delta$-separated subset of $B(0,1)$. By definition, for all $t\in B(0,1)\setminus \T$, $\T\cup\ac{t}$ is no longer a $\delta$-net and therefore that the family of balls $\ac{B(t,\delta),\ t\in\T}$ covers $B(0,1)$.  Consequently 
\[
\NN(\delta)\le |\T|\le (1+2\delta^{-1})^{D}.
\]
\end{proof}

Let us now turn to the proof of~\eref{svanorm}. Note that it is enough to prove that for some $u<H+2\sqrt{2v^{2}x}+ 2bx$ and all finite sets $T$ satisfying Inequalities~\eref{debase} and~\eref{contraintes}  
\[
\P\pa{\sup_{t\in T}\pa{X_{t}-X_{t_{0}}} > u}\le e^{-x}.
\]
Indeed, for any sequence $\pa{T_{n}}_{n\ge 0}$ of finite subsets of $T$ increasing towards $T$, that is, satisfying $T_{n}\subset T_{n+1}$ for all $n\ge 0$ and $\bigcup_{n\ge 0}T_{n}=T$, the sets 
\[
\ac{\sup_{t\in T_{n}}\pa{X_{t}-X_{t_{0}}} > u}
\]
increases (for the inclusion) towards $\ac{Z>u}$. Therefore, 
\[
\P\pa{Z> u}=\lim_{n\to +\infty}\P\pa{\sup_{t\in T_{n}}\pa{X_{t}-X_{t_{0}}} > u}.
\]
Consequently, we shall assume hereafter that $T$ is finite. 

For $k\ge 0$ and $j\in\ac{2,\infty}$ define the sets $\A_{j,k}$ as follows. We first consider the case $j=2$. For $k=0$, $\A_{2,0}=\ac{T}$. By applying Proposition~\ref{entropie} with $\|\ \|=\|\ \|_{2}/v$ and $\delta=1/4$, we can cover $T\subset B_{2}(t_{0},v)$ with at most $9^{D}$ balls with radius $v/4$. From such a finite covering $\ac{B_{1},\ldots,B_{N}}$ with $N\le 9^{D}$, it is easy to derive a partition $\A_{2,1}$ of $T$ by at most $9^{D}$ sets of diameter not larger than $v/2$. Indeed, $\A_{2,1}$ can merely consist of the non-empty sets among the family 
\[
\ac{\pa{B_{k}\setminus\bigcup_{1\le \ell<k}B_{\ell}}\cap T,\ \ k=1,\ldots,N}
\]
(with the convention $\bigcup_{\varnothing}=\varnothing$). Then, for $k\ge 2$, proceed by induction using Proposition~\ref{entropie} repeatedly. Each element $A\in\A_{2,k-1}$ is a subset of a ball of radius $2^{-k}v$ and can be partitioned similarly as before into $5^{D}$ subsets of balls of radii $2^{-(k+1)}v$. By doing so, the partitions $\A_{2,k}$ with $k\ge 1$ satisfy $\A_{2,k}\subset \A_{2,k-1}$, $|\A_{2,k}|\le (1.8)^{D}\times 5^{kD}$ and for all $A\in\A_{2,k}$, 
\[
\sup_{s,t\in A}\|s-t\|_{2}\le 2^{-k}v.
\]
Let us now turn to the case $j=+\infty$. If $c>0$, define the partitions $\A_{\infty, k}$ in exactly the same way as we did for the $\A_{2, k}$. Similarly, the partitions $\A_{\infty,k}$ with $k\ge 1$ satisfy $\A_{\infty,k}\subset \A_{\infty,k-1}$, $|\A_{\infty,k}|\le (1.8)^{D}\times 5^{kD}$ and for all $A\in\A_{\infty,k}$, 
\[
\sup_{s,t\in A}c\|s-t\|_{\infty}\le 2^{-k}b.
\]
When $c=0$, we simply take $\A_{\infty, k}=\ac{T}$ for all $k\ge 0$ and note that the properties above are fulfilled as well.

Finally, define the partition $\A_{k}$ for $k\ge 0$ as that generated by $\A_{2,k}$ and $\A_{\infty,k}$, that is 
\[
\A_{k}=\ac{A_{2}\cap A_{\infty}|\ A_{2}\in\A_{2,k},\ A_{\infty}\in\A_{\infty,k}}.
\]
Clearly, $\A_{k+1}\subset \A_{k}$. Besides, $|\A_{0}|=1$ and for $k\ge 1$, 
\[
|\A_{k}|\le |\A_{2,k}||\A_{\infty,k}|\le (1.8)^{2D}\times 5^{2kD}.
\]
The set $T$ being finite, we can apply Theorem~\ref{chi2}. Actually, our construction of the $\A_{k}$ allows us to slightly gain in the constants. Going back to the proof of Theorem~\ref{chi2}, we note that 
\[
|E_{k}|=|\ac{\pa{\pi_{k}(t),\pi_{k+1}(t)}|\ t\in T}|\le |\A_{k+1}|\le 9^{2D}\times 5^{2kD}
\]
since the element $\pi_{k+1}(t)$ determines $\pi_{k}(t)$ in a unique way. This means that one can take $N_{k}=9^{2D}\times 5^{2kD}$ in the proof of Theorem~\ref{chi2}. By taking the notations of Theorem~\ref{chi2}, we have,  
\begin{eqnarray*}
H&\le &\sum_{k\ge 0}2^{-k}\cro{v\sqrt{2\log(2^{k+1}\times 9^{2D}\times 5^{2kD})}+b\log\pa{2^{k+1}\times 9^{2D}\times 5^{2kD}}}\\
&<& 14\sqrt{Dv^{2}}+18Db
\end{eqnarray*}
and using the concavity of $x\mapsto \sqrt{x}$, we get 
\begin{eqnarray*}
H+2\sqrt{2v^{2}x}+ 2bx &\le & 14\sqrt{Dv^{2}}+2\sqrt{2v^{2}x}+18b(D+x)\\
&\le& 18\pa{\sqrt{v^{2}\pa{D+x}}+b(D+x)}.
\end{eqnarray*}
which leads to the result.

\subsection{A control of $\chi^{2}$-type random variables}\label{sect-chi}
We have the following result. 

\begin{thm}\label{chi}
Let $S$ be some linear subspace of $\R^{n}$ with dimension $D$. If the coordinates of $\eps$ are independent and satisfy~\eref{bernstein}, for all $x,u>0$, 
\begin{equation}\label{c1}
\P\cro{|\Pi_{S}\eps|_{2}^{2}\ge \kappa^{2}\pa{\sigma^{2}+{2cu\over \kappa}}\pa{D+x},\ |\Pi_{S}\eps|_{\infty}\le u}\le e^{-x}
\end{equation}
with $\kappa=18$ and 
\begin{equation}\label{c2}
\P\pa{\ab{\Pi_{S}\eps}_{\infty}\ge u}\le 
2n\exp\cro{-{x^{2}\over 2\Lambda_{2}^{2}(S)\pa{\sigma^{2}+cx}}}
\end{equation}
where $\Lambda_{2}(S)$ is defined by~\eref{defL}.
\end{thm}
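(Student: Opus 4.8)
The plan is to deduce both inequalities from the norm-based chaining bound of Theorem~\ref{norm} (and, for \eref{c2}, directly from Bernstein's inequality~\eref{bern01}). First I would set up the process: write $|\Pi_S\eps|_2 = \sup_{t\in T}\<\eps,t\>$ where $T$ is a countable dense subset of the Euclidean unit ball of $S$, so that $X_t=\<\eps,t\>$ with $X_0=0$. For $s,t\in S$, the increment $X_t-X_s=\<\eps,t-s\>=\sum_i\eps_i(t_i-s_i)$ is a sum of independent centered variables, and a Laplace-transform computation from~\eref{bernstein} gives that $X_t-X_s$ satisfies~\eref{debase}: indeed $\log\E[e^{\lambda\<\eps,t-s\>}]\le\sum_i\frac{\lambda^2(t_i-s_i)^2\sigma^2}{2(1-|\lambda|\,|t_i-s_i|c)}\le\frac{\lambda^2\sigma^2|t-s|_2^2}{2(1-|\lambda|\,|t-s|_\infty c)}$, so Assumption~\ref{momexp} holds with $d(s,t)=\sigma|t-s|_2$ and $c\,\delta(s,t)=c|t-s|_\infty$. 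This is essentially the computation already carried out in the examples of Section~\ref{sect:I}.

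Next, on the event $\{|\Pi_S\eps|_\infty\le u\}$ I would restrict the supremum to the truncated index set $T'=\{t\in S:|t|_2\le 1\}$, noting that on this event $\sup_{t\in T'}\<\eps,t\>=|\Pi_S\eps|_2$ is unchanged, and more importantly that I want to invoke Assumption~\ref{lineaire} with a bound on $\|t-t_0\|_\infty$. The clean way: since the relevant randomness $\eps$ is fixed on the conditioning event, replace $\eps$ by its projection and observe that $\<\eps,t\>=\<\Pi_S\eps,t\>$ for $t\in S$; then apply Theorem~\ref{norm} to the process indexed by $T''=\{t\in S: \|t\|_2:=\sigma|t|_2\le\sigma,\ c\|t\|_\infty:=c|t|_\infty\cdot\Lambda_2(S)\le\cdots\}$. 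Here I use the norm connection $\Lambda_2(S)=\sup_{t\in S\setminus\{0\}}|t|_\infty/|t|_2$ from Section~\ref{sec:unifions}, which gives $c|t|_\infty\le c\Lambda_2(S)$ for $|t|_2\le1$; hmm, but the stated bound has $u$ rather than $c\Lambda_2(S)$, so the correct route must instead incorporate the event $\{|\Pi_S\eps|_\infty\le u\}$ into the sup-norm bound $b$ via duality. The honest formulation is: on $\{|\Pi_S\eps|_\infty\le u\}$, for every $t\in S$ with $|t|_1\le$ something we control $\<\eps,t\>$... Rather, apply Theorem~\ref{norm} with $v=\sigma$, $d(s,t)=\sigma|t-s|_2$, and with the sup-norm distance and bound $b$ chosen so that $c\delta(s,t)\le b$ holds on $T$; since $|t-s|_\infty\le\Lambda_2(S)|t-s|_2\le 2\Lambda_2(S)$ we may take $b=2c\Lambda_2(S)$, but to recover $u$ one conditions and uses that $|\Pi_S\eps|_\infty\le u$ forces the increments to be controlled by $u$ as well. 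I would then plug $D_m=D$, $v^2=\sigma^2$, $b\le u/\kappa$ into~\eref{svanorm}, square it (using $(a+b)^2\le$ handled by the algebra $\kappa^2(\sqrt{v^2(D+x)}+b(D+x))^2\le\kappa^2(v^2+ b^2(D+x))(D+x)\cdot$const and then $b^2(D+x)\le bu\cdot$const since $b(D+x)\le\sqrt{v^2(D+x)}+b(D+x)\le$ the bound itself... ), arriving at the factor $\kappa^2(\sigma^2+2cu/\kappa)(D+x)$ after collecting constants with $\kappa=18$.

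For~\eref{c2} the argument is much shorter: $|\Pi_S\eps|_\infty=\max_i|\<\Pi_Se_i,\eps\>|$, and each $\<\Pi_Se_i,\eps\>=\sum_j(\Pi_Se_i)_j\eps_j$ is a sum of independent centered variables satisfying, via~\eref{bernstein}, the exponential bound~\eref{exp} with variance proxy $\sum_j(\Pi_Se_i)_j^2\sigma^2=|\Pi_Se_i|_2^2\sigma^2\le\Lambda_2^2(S)\sigma^2$ and scale $c|\Pi_Se_i|_\infty\le c|\Pi_Se_i|_2\le c\Lambda_2(S)$; applying Bernstein~\eref{bern01} to $\pm\<\Pi_Se_i,\eps\>$ and a union bound over $i=1,\dots,n$ (factor $2n$) yields~\eref{c2}.

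\textbf{Main obstacle.} The delicate point is the first inequality: cleanly transferring the conditioning event $\{|\Pi_S\eps|_\infty\le u\}$ into a legitimate choice of the sup-norm radius $b$ in Assumption~\ref{lineaire} and Theorem~\ref{norm}, and then doing the $\sqrt{\,}\mapsto(\ )^2$ bookkeeping so that the two error terms $\kappa^2\sigma^2(D+x)$ and $2\kappa c u(D+x)$ emerge with exactly these constants. The $\Lambda_2(S)$-versus-$u$ mismatch signals that one does not simply bound the $\ell_\infty$-diameter of $T$ deterministically; instead one works on the conditioning event where $\sup_t\<\eps,t\>$ over the $\ell_\infty$-ball is itself at most $u$, which is what lets the relevant increment scale be $u$ rather than $c\Lambda_2(S)$. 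Getting that reduction rigorous (rather than heuristic) is the crux; everything else is the routine Laplace-transform and Bernstein computations indicated above.
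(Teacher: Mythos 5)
Your outline of~\eref{c2} is close but off by a constant: you bound the Bernstein scale by $c|\Pi_Se_i|_\infty\le c|\Pi_Se_i|_2\le c\Lambda_2(S)$, which only gives a denominator of $2\bigl(\Lambda_2^2(S)\sigma^2+c\Lambda_2(S)x\bigr)$, not the claimed $2\Lambda_2^2(S)(\sigma^2+cx)$. Since $\Lambda_2(S)\le 1$, you need the sharper bound $|\Pi_Se_i|_\infty\le\Lambda_2^2(S)$, which comes from $\<\Pi_Se_i,e_{i'}\>=\<\Pi_Se_i,\Pi_Se_{i'}\>$ and Cauchy--Schwarz: $\max_{i'}\ab{\<\Pi_Se_i,\Pi_Se_{i'}\>}\le\Lambda_2^2(S)$. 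That extra factor matters downstream in the proof of Theorem~\ref{selmod}.

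For~\eref{c1} you correctly identify the crux and then stop short of it. The missing idea is elementary: the supremum of $X_t=\<\eps,t\>$ over the Euclidean unit ball of $S$ is attained at the single point $\hat t=\Pi_S\eps/\chi$ where $\chi=|\Pi_S\eps|_2$, and on the event $\ac{\chi\ge z,\ |\Pi_S\eps|_\infty\le u}$ this maximizer satisfies $|\hat t|_\infty=|\Pi_S\eps|_\infty/\chi\le u/z$. Hence on that event $\chi=\sup_{t\in T}X_t$ with the \emph{deterministic} set
\[
T=\ac{t\in S\ :\ |t|_2\le 1,\ |t|_\infty\le u/z},
\]
and so $\P\pa{\chi\ge z,\ |\Pi_S\eps|_\infty\le u}\le\P\pa{\sup_{t\in T}X_t\ge z}$. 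You now apply Theorem~\ref{norm} with $\|\cdot\|_2=\sigma|\cdot|_2$, $\|\cdot\|_\infty=|\cdot|_\infty$, $t_0=0$, $v=\sigma$ and $b=cu/z$; this yields
\[
\P\Bigl(\sup_{t\in T}X_t\ge\kappa\bigl(\sqrt{\sigma^2(D+x)}+cuz^{-1}(D+x)\bigr)\Bigr)\le e^{-x}.
\]
The apparent circularity ($z$ appears on both sides) is resolved not by squaring and bookkeeping as you gesture at, but by the self-consistent choice $z=\kappa\sqrt{(\sigma^2+2cu/\kappa)(D+x)}$. Indeed, with $\alpha=\sigma^2(D+x)$ and $\beta=(2cu/\kappa)(D+x)$ one has $\sqrt{\alpha+\beta}-\sqrt{\alpha}=\beta/(\sqrt{\alpha+\beta}+\sqrt{\alpha})\ge\beta/(2\sqrt{\alpha+\beta})$, which after multiplying by $\kappa$ reads exactly $z\ge\kappa\bigl(\sqrt{\sigma^2(D+x)}+cu(D+x)/z\bigr)$. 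Thus $\P(\chi\ge z,\ |\Pi_S\eps|_\infty\le u)\le e^{-x}$, and $z^2=\kappa^2(\sigma^2+2cu/\kappa)(D+x)$ is precisely the threshold in~\eref{c1}. So the gap in your proposal is genuine: you need the observation that the maximizer itself is explicitly $\Pi_S\eps/\chi$ and is $\ell_\infty$-controlled on the event $\chi\ge z$, rather than any ``duality'' transfer, together with the fixed-point choice of $z$ that makes the $b=cu/z$ substitution close.
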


\begin{proof}
Let us set $\chi=|\Pi_{S}\eps|_{2}$. 
For $t\in S$, let $X_{t}=\<\eps,t\>$ and $t_{0}=0$. It follows from the independence of the $\eps_{i}$ and Inequality~\eref{bernstein} that~\eref{debase} holds with $d(t,s)=\sigma|t-s|_{2}$ and $\delta(t,s)=|t-s|_{\infty}$, for all $s,t\in S$.  
The random variable $\chi$ equals the supremum of the $X_{t}$ when $t$ runs among those elements $t$ of $S$ satisfying $|t|_{2}\le 1$.  Besides, the supremum is achieved for $\hat t=\Pi_{S}\eps/\chi$ and thus, on the event $\ac{\chi\ge z,\ |\Pi_{S}\eps|_{\infty}\le u}$ 
\[
\chi=\sup_{t\in T}X_{t}\ \ {\rm with} \ T=\ac{t\in S,\ |t|_{2}\le 1,\ |t|_{\infty}\le  uz^{-1}}
\]
leading to the bound
\begin{eqnarray*}
\P\pa{\chi\ge z,\ |\Pi_{S}\eps|_{\infty}\le u}\le \P\pa{\sup_{t\in T}X_{t}\ge z}.
\end{eqnarray*}
We take $z=\kappa\sqrt{(\sigma^{2}+2cu\kappa^{-1})(D+x)}$ and (using the concavity of $x\mapsto \sqrt{x}$) note that 
\[
z\ge \kappa\pa{\sqrt{\sigma^{2}(D+x)}+cuz^{-1}(D+x)}.
\]
Then, by applying Theorem~\ref{norm} with $v=\sigma$, $b=cu/z$, we obtain Inequality~\eref{c1}.

Let us now turn to Inequality~\eref{c2}.
Under~\eref{bernstein}, we can apply Bernstein's Inequality~\eref{bern} to $X=\<\eps,t\>$ and $X=\<-\eps,t\>$ with $t\in S$, $v^{2}=\sigma^{2}|t|_{2}^{2}$ and $c|t|_{\infty}$ in place of $c$ and get for all $t\in S$ and $x>0$
\begin{equation}\label{eq1}
\P\pa{|\<\eps,t\>|\ge x}\le 2\exp\cro{-{x^{2}\over 2\pa{\sigma^{2}|t|_{2}^{2}+c|t|_{\infty}x}}}.
\end{equation}
Let us take $t=\Pi_{S}e_{i}$ with $i\in\ac{1,\ldots,n}$. Since $|t|_{2}\le \Lambda_{2}(S)$ and 
\[
|t|_{\infty}=\max_{i,i'=1,\ldots,n}\ab{\<\Pi_{S}e_{i},e_{i'}\>}=\max_{i,i'=1,\ldots,n}\ab{\<\Pi_{S}e_{i},\Pi_{S}e_{i'}\>}\le \Lambda_{2}^{2}(S),
\]
we obtain for all $i\in\ac{1,\ldots,n}$
\begin{eqnarray*}
\P\pa{|\<\Pi_{S}\eps,e_{i}\>|\ge x}&\le& 2\exp\cro{-{x^{2}\over 2\Lambda_{2}^{2}(S)\pa{\sigma^{2}+cx}}}
\end{eqnarray*}
We obtain Inequality~\eref{c2} by summing up these probabilities for $i=1,\ldots,n$.
\end{proof}

\subsection{Proof of Theorem~\ref{selmod}}
Let us fix some $m\in\M$. It follows from simple algebra and the inequality ${\rm crit}(\hat m)\le {\rm crit}(m)$ that 
\[
\ab{f-\hat f_{\hat m}}_{2}^{2}\le \ab{f-\hat f_{m}}_{2}^{2}+2\<\eps,\hat f_{\hat m}-\hat f_{m}\>+\pen(m)-\pen(\hat m).
\]
Using the elementary inequality $2ab\le a^{2}+b^{2}$  for all $a,b\in\R$, we have for  $K>1$,
\begin{eqnarray*}
2\<\eps,\hat f_{\hat m}-\hat f_{m}\>
&\le& 2\ab{\hat f_{\hat m}-\hat f_{m}}_{2}\ab{\Pi_{S_{m}+S_{\hat m}}\eps}_{2}\\
&\le& K^{-1}\ab{\hat f_{\hat m}-\hat f_{m}}_{2}^{2}+K\ab{\Pi_{S_{m}+S_{\hat m}}\eps}_{2}^{2}\\
&\le& K^{-1}\cro{\pa{1+{K-1\over K}}\ab{\hat f_{\hat m}-f}_{2}^{2}+\pa{1+{K\over K-1}}\ab{f-\hat f_{m}}_{2}^{2}}\\
&& \ \ \ +\ \ K\ab{\Pi_{S_{m}+S_{\hat m}}\eps}_{2}^{2},
\end{eqnarray*}
and we derive
\begin{eqnarray*}
{(K-1)^{2}\over K^{2}}\ab{f-\hat f_{\hat m}}_{2}^{2}&\le& {K^{2}+K-1\over K(K-1)}\ab{f-\hat f_{m}}_{2}^{2}+K\ab{\Pi_{S_{m}+S_{\hat m}}\eps}_{2}^{2}-\pa{\pen(\hat m)-\pen(m)}\\
&\le& {K^{2}+K-1\over K(K-1)}\ab{f-\hat f_{m}}_{2}^{2}+\pen(m)\\
&& \ \ + K\ab{\Pi_{S_{m}+S_{\hat m}}\eps}_{2}^{2}-\pa{\pen(\hat m)+\pen(m)}.\\
\end{eqnarray*}
Setting 
\begin{eqnarray*}
A_{1}(\hat m)&=& K\kappa^{2}\pa{\sigma^{2}+{2cu\over \kappa}}\pa{{\ab{\Pi_{S_{m}+S_{\hat m}}\eps}_{2}^{2}\over \kappa^{2}\pa{\sigma^{2}+{2cu\over \kappa}}}-D_{\hat m}-D_{m}-\Delta_{\hat m}-\Delta_{m}}_{+}\1\ac{\ab{\Pi_{S_{m}+S_{\hat m}}\eps}_{\infty}\le u}\\
A_{2}(\hat m)&=& K\ab{\Pi_{S_{m}+S_{\hat m}}\eps}_{2}^{2}\1\ac{\ab{\Pi_{S_{m}+S_{\hat m}}\eps}_{\infty}\ge u}
\end{eqnarray*}
and using~\eref{pen}, we deduce that
\[
{(K-1)^{2}\over K^{2}}\ab{f-\hat f_{\hat m}}_{2}^{2}\le {K^{2}+K-1\over K(K-1)}\ab{f-\hat f_{m}}_{2}^{2}+\pen(m)+A_{1}(\hat m)+A_{2}(\hat m),
\]
and by taking the expectation on both side we get 
\[
{(K-1)^{2}\over K^{2}}\E\cro{\ab{f-\hat f_{\hat m}}_{2}^{2}}\le {K^{2}+K-1\over K(K-1)}\E\cro{\ab{f-\hat f_{m}}_{2}^{2}}+\pen(m)+\E\cro{A_{1}(\hat m)}+\E\cro{A_{2}(\hat m)}.
\]
The index $m$ being arbitrary, it remains to bound  $E_{1}=\E\cro{A_{1}(\hat m)}$ and $E_{2}=\E\cro{A_{2}(\hat m)}$ from above. 

Let $m'$ be some deterministic index in $\M$. By using Theorem~\ref{chi} with $S=S_{m}+S_{m'}$ the dimension of which is not larger than $D_{m}+D_{m'}$ and integrating~\eref{c1} with respect to $x$ we get
\[
\E\cro{A(m')}\le K\kappa^{2}\pa{\sigma^{2}+{2cu\over \kappa}}e^{-\Delta_{m}-\Delta_{m'}}
\] 
and thus 
\[
E_{1}\le \sum_{m'\in\M}\E\cro{A(m')}\le  K\kappa^{2}\pa{\sigma^{2}+{2cu\over \kappa}}\Sigma.
\]

Let us now turn to $\E\cro{A_{2}(\hat m)}$. By using that $S_{\hat m}+S_{m}\subset \S$, $\ab{\Pi_{S_{\hat m}+S_{m}}\xi}_{2}^{2}\le \ab{\Pi_{\S}\xi}_{2}^{2}\le n\ab{\Pi_{\S}\xi}_{\infty}^{2}$. Besides, it follows from the definition of $\overline \Lambda_{\infty}$ that 
\[
\ab{\Pi_{S_{\hat m}+S_{m}}\xi}_{\infty}= \ab{\Pi_{S_{\hat m}+S_{m}}\Pi_{\S}\xi}_{\infty}\le \overline \Lambda_{\infty}\ab{\Pi_{\S}\xi}_{\infty}.
\]
and therefore, setting $x_{0}=\overline \Lambda_{\infty}^{-1}u$
\begin{eqnarray*}
E_{2}&\le& Kn\E\cro{\ab{\Pi_{\S}\xi}_{\infty}^{2}\1\ac{\ab{\Pi_{\S}\xi}_{\infty}\ge x_{0}}}.
\end{eqnarray*}
We shall now use the following lemma the proof of which is deferred to the end of the section.
\begin{lem}\label{lem1}
Let $X$ be some nonnegative random variable satisfying for all $x>0$, 
\begin{equation}\label{bernX}
\P\pa{X\ge x}\le a\exp\cro{-\phi(x)}\ \ \ {\rm with}\ \ \ \phi(x)= {x^{2}\over 2\pa{\alpha+\beta x}}\ \ \ 
\end{equation}
where $a, \alpha>0$ and $\beta\ge 0$. For $x_{0}>0$ such that $\phi(x_{0})\ge 1$,
\[
\E\cro{X^{p}\1\ac{X\ge x_{0}}}\le ax_{0}^{p}e^{-\phi(x_{0})}\pa{1+{ep!\over \phi(x_{0})}},\ \ \ \forall p\ge 1.
\]
\end{lem}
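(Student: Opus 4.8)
The plan is to prove the tail bound for $X\geq x$ by splitting over the dyadic range and to obtain the truncated moment $\E[X^p\1\{X\geq x_0\}]$ by integration by parts. First I would write
\[
\E\cro{X^{p}\1\ac{X\ge x_{0}}}=x_{0}^{p}\P\pa{X\ge x_{0}}+\int_{x_{0}}^{+\infty}px^{p-1}\P\pa{X\ge x}\,dx,
\]
using the standard layer-cake identity for a nonnegative random variable together with the fact that $\frac{d}{dx}\bigl(x^p\1\{x\geq x_0\}\bigr)$ contributes a point mass $x_0^p$ at $x_0$ plus the density $px^{p-1}$ on $(x_0,\infty)$. Then I would bound $\P(X\geq x)\leq a e^{-\phi(x)}$ from~\eref{bernX} to reduce the problem to controlling $x_0^p e^{-\phi(x_0)}$ and $\int_{x_0}^{+\infty}px^{p-1}e^{-\phi(x)}\,dx$.

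The main work is estimating the integral $\int_{x_0}^{+\infty}px^{p-1}e^{-\phi(x)}\,dx$. The key observation is that $\phi$ is convex and increasing on $\R_+$ with $\phi(x_0)\geq 1$, and moreover $\phi$ satisfies a superadditivity-type bound: for $x\geq x_0$ one has $\phi(x)\geq \phi(x_0)+(x/x_0-1)$, which I would get by checking that $\phi(x)/x$ is nondecreasing (since $\phi(x)/x=\frac{x}{2(\alpha+\beta x)}$ is manifestly nondecreasing in $x$), so that $\phi(x)\geq (x/x_0)\phi(x_0)\geq \phi(x_0)+(x/x_0-1)\phi(x_0)\geq \phi(x_0)+(x/x_0-1)$. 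Substituting $y=x/x_0$ then gives
\[
\int_{x_{0}}^{+\infty}px^{p-1}e^{-\phi(x)}\,dx\le x_{0}^{p}e^{-\phi(x_{0})}\int_{1}^{+\infty}py^{p-1}e^{-(y-1)}\,dy\le x_{0}^{p}e^{-\phi(x_{0})}\,e\int_{0}^{+\infty}p(z+1)^{p-1}e^{-z}\,dz,
\]
and the last integral is bounded by $e\cdot p!$ after expanding $(z+1)^{p-1}\leq$ a sum of gamma integrals, or more crudely by noting $\int_0^\infty p(z+1)^{p-1}e^{-z}dz\leq p\cdot p!\,$ is too weak — I would instead use $\int_1^\infty py^{p-1}e^{-(y-1)}dy = e\,p\,\Gamma(p,1)\leq e\,p! $ via the incomplete gamma function $\Gamma(p,1)\leq\Gamma(p)=(p-1)!$, giving the clean constant $e\,p!$. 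Collecting the two pieces yields
\[
\E\cro{X^{p}\1\ac{X\ge x_{0}}}\le ax_{0}^{p}e^{-\phi(x_{0})}+ax_{0}^{p}e^{-\phi(x_{0})}\cdot\frac{e\,p!}{1}\cdot\frac{1}{\phi(x_0)}\cdot\phi(x_0),
\]
and after organizing so the $1/\phi(x_0)$ appears as claimed (which one can always do since $\phi(x_0)\geq 1$ makes $1\leq \phi(x_0)$ harmless in the other direction only — here I would instead carry the factor $e\,p!$ directly and note $1+e\,p!\leq 1+e\,p!/\phi(x_0)$ fails, so the correct bookkeeping is to bound the integral term by $ax_0^p e^{-\phi(x_0)}\,e\,p!$ and observe $e\,p!\le e\,p!/\phi(x_0)$ is false) — so more carefully I would track that dividing $\phi$'s growth gives an extra $1/\phi(x_0)$: replacing the crude bound $\phi(x)\geq\phi(x_0)+(x/x_0-1)$ by $\phi(x)\geq\phi(x_0)(x/x_0)$ and substituting $w=\phi(x_0)(x/x_0-1)$ produces the factor $1/\phi(x_0)$ in front of the gamma integral, landing exactly on $ax_0^p e^{-\phi(x_0)}(1+e\,p!/\phi(x_0))$.

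The step I expect to be the main obstacle is getting the constant exactly right: one must be careful whether to bound $\phi(x)$ below by the additive shift $\phi(x_0)+(x/x_0-1)\phi(x_0)$ or the multiplicative $\phi(x_0)\cdot x/x_0$, since only the latter, combined with the change of variables that absorbs the Jacobian $dx = (x_0/\phi(x_0))\,dw$ up to the factor coming from $x/x_0$, produces the advertised $1/\phi(x_0)$ gain rather than just a bounded constant; the inequality $\phi(x_0)\geq 1$ is what makes the resulting estimate both valid and clean. Everything else — the layer-cake identity, monotonicity of $\phi(x)/x$, and the gamma-function bound $\Gamma(p,1)\leq (p-1)!$ — is routine.
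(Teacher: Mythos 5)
Your final argument is correct, but it takes a genuinely different route from the paper. The paper bounds $I_p=\int_{x_0}^\infty p x^{p-1}e^{-\phi(x)}\,dx$ by integration by parts, writing $px^{p-1}e^{-\phi(x)} = \frac{px^{p-1}}{\phi'(x)}\cdot\phi'(x)e^{-\phi(x)}$, using that $\phi'$ is increasing to pull out $1/\phi'(x_0)$, and then unrolling the resulting recursion $I_p\le \frac{p}{\phi'(x_0)}\bigl[x_0^{p-1}e^{-\phi(x_0)}+(p-1)I_{p-1}\bigr]$ by induction on $p$, finally using $x_0\phi'(x_0)\ge\phi(x_0)\ge1$ to sum the series to $e\,p!/\phi(x_0)$. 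You instead observe that $\phi(x)/x$ is nondecreasing (equivalently, since $\phi(0)=0$ and $\phi$ is convex, the chord slope from the origin increases), so $\phi(x)\ge(x/x_0)\phi(x_0)$ for $x\ge x_0$; a single change of variables $w=\phi(x_0)(x/x_0-1)$ then produces the $1/\phi(x_0)$ factor directly and reduces the remaining integral to $\int_0^\infty p(1+w/\phi(x_0))^{p-1}e^{-w}\,dw\le ep\,\Gamma(p,1)\le e\,p!$, using $\phi(x_0)\ge1$ and $\Gamma(p,1)\le(p-1)!$. Both proofs exploit exactly the same two facts about $\phi$ (convexity through the origin, and $\phi(x_0)\ge1$), but yours avoids the induction and is arguably cleaner; the paper's integration-by-parts recursion is a very standard trick for tails of the form $e^{-\phi}$. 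One expository caveat: your write-up records a first attempt with the additive bound $\phi(x)\ge\phi(x_0)+(x/x_0-1)$, notes it fails to produce the $1/\phi(x_0)$ factor, then switches to the multiplicative bound mid-paragraph; a final version should present only the multiplicative bound so the operative estimate is not buried under the discarded one.
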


We apply the lemma with $p=2$ and $X=\ab{\Pi_{\S}\xi}_{\infty}$ for which we know from~\eref{c2} that~\eref{bernX} holds with $a=2n$, $\alpha=\Lambda_{2}^{2}(S)\sigma^{2}$ and $\beta=\Lambda_{2}^{2}(S)c$. Besides, it follows from the definition of $x_{0}$ and the fact that $n\ge 2$ that
\[
\phi(x_{0})={x_{0}^{2}\over 2\Lambda_{2}^{2}(S)\pa{\sigma^{2}+cx_{0}}}\ge \log\pa{n^{2}e^{z}}\ge 1. 
\]
The assumptions of Lemma~\ref{lem1} being checked, we deduce that $E_{2}\le 2Kx_{0}^{2}e^{-z}$ and conclude the proof putting these upper bounds on $E_{1}$ and $E_{2}$ together. 

Let us now turn to the proof of the lemma.

\begin{proof}[Proof of Lemma~\ref{lem1}]
Since  
\[
\E\cro{X^{p}\1\ac{X\ge x_{0}}}\le x_{0}^{p}\P\pa{X\ge x_{0}}+\int_{x_{0}}^{+\infty}px^{p-1}\P\pa{X\ge x}dx,
\]
it remains to bound from above the integral. Let us set 
\[
I_{p}=\int_{x_{0}}^{+\infty}px^{p-1}e^{-\phi(x)}dx.
\]
Note that $\phi'$ is increasing and by integrating by parts we have  
\begin{eqnarray*}
I_{p}&=&\int_{x_{0}}^{+\infty}{px^{p-1}\over \phi'(x)}\phi'(x)e^{-\phi(x)}\\
&\le& {p\over \phi'(x_{0})}\cro{x_{0}^{p-1}e^{-\phi(x_{0})}+(p-1)I_{p-1}}.
\end{eqnarray*}
By induction over $p$ and using that $x_{0}\phi'(x_{0})\ge \phi(x_{0})\ge 1$ we get
\begin{eqnarray*}
I_{p}&\le& p!x_{0}^{p}e^{-\phi(x_{0})}\sum_{k=0}^{p-1}{\pa{x_{0}\phi'(x_{0})}^{-(k+1)}\over (p-k-1)!}\le {ep!x_{0}^{p}e^{-\phi(x_{0})}\over \phi(x_{0})}.
\end{eqnarray*}
\end{proof}

\subsection{Proof of Proposition~\ref{histo}}
Let $m$ be some partition of $\ac{1,\ldots, n}$. By applying 
Proposition~\ref{control-lambda} with $J=\ac{1}$, $P=m$ and $\Phi=1$, we obtain 
\[
\Lambda_{2}^{2}(S_{m})\le {1 \over \min_{I\in m}|I|}\ \ {\rm and}\ \ \Lambda_{\infty}(S_{m})\le 1.
\]
In fact, one can check that these inequalities are equalities. Since for all $m\in\M$, $S_{m}\subset S_{\mathfrak{m}}$, we deduce that under~\eref{condhisto}
\[
\Lambda_{2}^{2}(\S)\le \Lambda_{2}^{2}(S_{\mathfrak{m}})\le {1\over a^{2}\log^{2}(n)}
\]
For two partitions $m,m'$ of $\ac{1,\ldots, n}$, define 
\begin{equation}\label{suppart}
m\vee m'=\ac{I\cap I'|\  I\in m,\ I'\in m'}.
\end{equation}
Since the elements of $m,m'$ for $m,m'\in\M$ consist of consecutive integers $S_{m\vee m'}=S_{m}+S_{m'}$ and therefore
\[
\overline{\Lambda}_{\infty}=\sup_{m,m'\in\M}\Lambda_{\infty}(S_{m}+S_{m'})=\sup_{m,m'\in\M}\Lambda_{\infty}(S_{m\vee m'})=1.
\]
The result follows by applying Theorem~\ref{selmod} with $z=b\log(n)$.
 
\subsection{Proof of Proposition~\ref{pp}}
Let  $m$ be a partition of $\ac{1,\ldots,n}$ such that for all 
$I\in m$, $I$ consists of consecutive integers and $|I|> d$. As proved in Mason \& Handscom~\citeyearpar{MR1937591}, an orthonormal basis of $S_{m}$ is given by the vectors $\phi_{j,I}$ defined by 
\[
\<\phi_{0,I},e_{i}\>={1\over \sqrt{|I|}}\1_{I}(i)
\]
and for $j=1,\ldots,d$
\[
\<\phi_{j,I},e_{i}\>=\sqrt{2\over |I|}Q_{j}\pa{\cos\pa{{(i-\min I+1/2)\pi\over |I|}}}\1_{I}(i)
\]
where $Q_{j}$ is the Chebyshev polynomial of degree $j$ defined on $[-1,1]$ by the formula
\[
Q_{j}(x)=\cos(j\theta)\ \ {\rm if}\ \ x=\cos\theta.
\]
By applying Proposition~\ref{control-lambda} with $\Phi=\sqrt{2}$, $P=m$ and $J=\ac{0,\ldots,d}$ and get
\[
\Lambda_{2}^{2}(S_{m})\le {2(d+1)\over \min_{I\in m}|I|}\ \ {\rm and}\ \ \Lambda_{\infty}(S_{m})\le 2(d+1).
\]
Since for those $m\in\M$, $S_{m}\subset S_{\mathfrak{m}}$, $\S=\sum_{m\in \M}S_{m}\subset S_{\mathfrak{m}}$ and therefore
\[
\Lambda_{2}^{2}(\S)\le \Lambda_{2}^{2}(S_{\mathfrak{m}})\le {1\over a^{2}\log^{2}(n)}.
\]
Moreover, since for the elements of $m$ and $m'$ for $m,m'\in\M$ consist of consecutive integers $S_{m}+S_{m'}=S_{m\vee m'}$ with $m\vee m'$ is defined by~\eref{suppart} and 
\[
\sup_{m,m'\in\M}\Lambda_{\infty}(S_{m}+S_{m'})=\sup_{m,m'\in\M}\Lambda_{\infty}(S_{m\vee m'})\le 2(d+1)
\]
which implies that $\overline{\Lambda}_{\infty}\le 2(d+1)$. It remains to apply Theorem~\ref{selmod} with $z=b\log(n)$. 

\subsection{Proof of Proposition~\ref{trigo}}
Let $\mathfrak{m}=\ac{0,\ldots,2\overline D}$. Under the 	assumption that $2\overline D+1\le \sqrt{n}/(a\log(n))$,  for all $m\subset\mathfrak{m}$, the family of vectors $\ac{\phi_{j}}_{j\in m}$ is a orthonormal basis of $S_{m}$. By applying Proposition~\ref{control-lambda} with $P$ reduced to $\ac{\ac{1,\ldots,n}}$, $J=m$, $\Phi=\sqrt{2}$, we get 
\[
\Lambda_{2}^{2}(S_{m})\le {2|m|\over n}\ \ {\rm and}\ \ \Lambda_{\infty}(S_{m})\le \sqrt{n}\Lambda_{2}(S_{m})\le \sqrt{2|m|}.
\]
Since for all $m\in \M$, $S_{m}\subset S_{\mathfrak{m}}$,  $\S=\sum_{m\in\M}S_{m}\subset S_{\mathfrak{m}}$ and therefore
\[
\Lambda_{2}^{2}(\S)\le \Lambda_{2}^{2}(S_{\mathfrak{m}})\le {2(2\overline D+1)\over n}.
\]
Moreover, for all $m,m'\in\M$, $S_{m}+S_{m'}=S_{m\cup m'}$ with $m\cup m'\subset \mathfrak{m}$ and thus, 
\[
\Lambda_{\infty}(S_{m}+S_{m'})\le \sqrt{2(|m\cup m'|}\le\sqrt{2(2\overline D+1)}. 
\]
It remains to apply Theorem~\ref{selmod} with $z=b\log(n)$. 

\thanks{Acknowledgment: We would like to thank Lucien Birg\'e for his helpful comments and for pointing us the book of Talagrand, which has actually been the starting point of this paper.}
\bibliographystyle{apalike}

\end{document}